\def\@tocline#1#2#3#4#5#6#7{\relax
  \ifnum #1>\c@tocdepth % then omit
  \else
    \par \addpenalty\@secpenalty\addvspace{#2}%
    \begingroup \hyphenpenalty\@M
    \@ifempty{#4}{%
      \@tempdima\csname r@tocindent\number#1\endcsname\relax
    }{%
      \@tempdima#4\relax
    }%
    \parindent\z@ \leftskip#3\relax \advance\leftskip\@tempdima\relax
    \rightskip\@pnumwidth plus4em \parfillskip-\@pnumwidth
    #5\leavevmode\hskip-\@tempdima
      \ifcase #1
       \or\or \hskip 1em \or \hskip 2em \else \hskip 3em \fi%
      #6\nobreak\relax
    \dotfill\hbox to\@pnumwidth{\@tocpagenum{#7}}\par
    \nobreak
    \endgroup
  \fi}
\numberwithin{equation}{section}
\theoremstyle{definition}
\newtheorem{thm}{Theorem}[section]
\newtheorem{lemma}[thm]{Lemma}
\newtheorem{cor}[thm]{Corollary}
\newtheorem{remark}[thm]{Remark}
\DeclareMathOperator{\GL}{\mathrm{GL}}
\DeclareMathOperator{\SL}{\mathrm{SL}}
\DeclareMathOperator{\PGL}{\mathrm{PGL}}
\DeclareMathOperator{\PSL}{\mathrm{PSL}}
\DeclareMathOperator{\im}{\mathrm{im}}
\newcommand{\n}{\mathfrak{n}}
\newcommand{\p}{\mathfrak{p}}
\newcommand{\Z}{\mathbb{Z}}
\newcommand{\F}{\mathbb F}
\newcommand{\Q}{\mathbb{Q}}
\newcommand{\R}{\mathbb{R}}
\newcommand{\C}{\mathbb{C}}
\newcommand{\Mod}[1]{\ (\mathrm{mod}\ #1)}
\newcommand{\demph}[1]{\textbf{#1}}
\newcommand{\mf}[1]{\mathfrak{#1}}
\newcommand{\Xbar}{\overline{X}}
\newcommand{\Ybar}{\overline{Y}}
\renewcommand\arraystretch{0.667}
\title{Bianchi period polynomials: Hecke action and congruences}
\author{Lewis Combes}
\address{School of Mathematics and Statistics, University of Sheffield, Sheffield, UK.}
\begin{document}

 \begin{abstract} Let $\Gamma=\PSL_2(\mathcal{O})$ be a Bianchi group associated to one of the five Euclidean imaginary quadratic fields. We show that the space of weight $k$ period polynomials for $\Gamma$ is ``dual'' to the space of weight $k$ modular symbols for $\Gamma$, reflecting the duality between the first and second cohomology groups of the arithmetic group $\Gamma$. Using this result, we describe the action of Hecke operators on the space of period polynomials for $\Gamma$ via the Heilbronn matrices. 

As in the classical case, spaces of Bianchi period polynomials are related to parabolic cohomology of Bianchi groups, and in turn, via the Eichler-Shimura-Harder Isomorphism, to spaces of Bianchi modular forms. In the second part of the paper, we numerically investigate congruences between level 1 Bianchi eigenforms via computer programs which implement the above mentioned Hecke action on spaces of Bianchi period polynomials. Computations with the Hecke action are used to indicate moduli of congruences between the underlying Bianchi forms; we then prove the congruences using the period polynomials. From this we find congruences between genuine Bianchi modular forms and both a base-change Bianchi form and an Eisenstein series. We believe these congruences are the first of their kind in the literature.
 \end{abstract} 
	
\maketitle

\tableofcontents

\section{Introduction}
\subsection{Background}
Let $\Gamma$ denote the Bianchi group $\PSL_2(\mathcal{O})$, where $\mathcal{O}$ is the ring of integers of a Euclidean imaginary quadratic field of class number 1 (see Remark \ref{non_euc} for the motivation behind these restrictions). The Eichler-Shimura-Harder isomorphism allows one to embed the space $S_{k+2}(\Gamma)$ of level $\Gamma$ weight $k+2$ cuspidal Bianchi modular forms into the cohomology of $\Gamma$ in a Hecke equivariant way:
\begin{equation*}
	\Theta: S_{k+2}(\Gamma) \hookrightarrow H^1(\Gamma, V_{k,k}(\C)).
\end{equation*}
Here $V_{k,k}(\C)$ is the standard weight module that we define below. It is well-known that the cohomology classes in the image of $\Theta$ can be represented with a $1$-cocycle that is \textit{parabolic}, that is, it vanishes on all parabolic elements of $\Gamma$. Such a 1-cocycle is determined by its value at the element $S=\left ( \begin{smallmatrix} 0 & -1 \\ 1 & 0 \end{smallmatrix} \right )$ of $\Gamma$. In turn, the map 
\begin{equation*}
	c \mapsto c(S)
\end{equation*}
establishes a linear injection of the space of parabolic cocycles into the weight module $V_{k,k}(\C)$:
\begin{equation*}
	\tau: Z^1_{par}(\Gamma, V_{k,k}(\C)) \hookrightarrow V_{k,k}(\C).
\end{equation*}
We will refer to the subspace of $V_{k,k}(\C)$ given by the image of $\tau$ as the \textit{space of Bianchi period polynomials}. 

While period polynomials for the classical modular group $\PSL_2(\Z)$ have been intensely studied since the 1973 paper of Manin \cite{Manin_1973}, Bianchi period polynomials have received much less attention, appearing for the first time in Karabulut's 2021 paper \cite{Karabulut}, in which they are described explicitly by way of known presentations for $\Gamma$. 

In this paper, we consider spaces of Bianchi period polynomials in the Euclidean cases and prove a duality between the space of Bianchi period polynomials and the cohomology group $H^2(\Gamma,V_{k,k}(\C))$, which in turn lets one define a Hecke action on the polynomials themselves. 

Once the Hecke action is introduced, we implement it on a computer and carry out numerical investigations. Our motivation is to find congruences between Bianchi modular forms by way of inspecting the prime divisors of various quantities coming from their period polynomials. The most famous example of such a phenomenon is the appearance of the prime 691 in the denominator of the leading coefficient of the normalised period polynomial associated to the discriminant newform, $\Delta \in S_{12}(\PSL_2(\Z))$, indicating its congruence with an Eisenstein series. In the spirit of this example, we exhibit congruences between genuine cusp forms of level 1 over $\Q(\sqrt{-11})$ and 
\begin{enumerate}
	\item an Eisenstein series,
	\item the base-change of a cusp form,
\end{enumerate}
modulo (primes over) 173 and 43 respectively. As far as we are aware, these are the first records of such congruences between higher weight Bianchi modular forms in the literature.

\subsection{Data availability statement}
We have written code to compute spaces of period polynomials and the associated Hecke action in \textsf{Magma} \cite{Magma}, which is available on GitHub at
\begin{center}
	\href{https://github.com/lewismcombes/BianchiPeriodPols}{\texttt{https://github.com/lewismcombes/BianchiPeriodPols}}
\end{center} 

The repository also contains many eigenvalues of the above-mentioned forms, numerically witnessing the congruences we prove (see Section 5). The code was written and run on \textsf{Magma} V2.25-4, on the Sheffield School of Mathematics and Statistics server, Intel\textsuperscript{\textregistered} Xeon\textsuperscript{\textregistered} CPU E5-2683 v3 @ 2.00GHz, with 128GB of RAM, running Ubuntu 22.04.2 LTS. The data computed can also be found at
\begin{center}
	\href{https://doi.org/10.6084/m9.figshare.25039679}{\texttt{https://doi.org/10.6084/m9.figshare.25039679}}
\end{center}

\subsection{Structure of the paper}
In Section 2 we recall various facts about the Bianchi groups $\Gamma$ and their cohomology groups. In Section 3 we connect these cohomology groups with period polynomials, proving a duality between period polynomials and modular symbols. In Section 4 we use this duality and Heilbronn matrices to define a Hecke action on period polynomials, and in Section 5, we give two methods that (conjecturally) allow the detection of congruences between Bianchi modular forms from their period polynomials. 

\subsection{Acknowledgements}
This work was completed while in receipt of the Engineering and Physical Sciences Research Council grant EP/R513313/1, as part of the author's Ph.D. studies, and will form part of their thesis. The author would like to thank Haluk \c{S}eng\"{u}n for his manifold guidance and support during the writing of this paper, which would have been much harder and much less interesting without his input. Additionally, some of the code appearing in the repository noted above was written by \c{S}eng\"{u}n (see the document there for more details). The author would also like to thank Alexandru Popa for sharing code to compute period polynomials for classical modular forms, which was instructive in our implementation of the Hecke action. It is our pleasure to thank Chris Williams and John Cremona for helpful comments on an earlier draft, and Aurel Page for pointing out how to prove the congruences we detect. All these comments helped the paper a great deal. Finally, we are grateful to the referee for useful feedback that has improved the paper.

\section{Cohomology of Bianchi groups}

\subsection{Bianchi groups}
Let $K=K_d=\Q(\sqrt{-d})$ be an imaginary quadratic field with ring of integers $\mathcal{O}=\mathcal{O}_d$. The modular groups 
\begin{equation*}
	\Gamma = \Gamma_d:=\PSL_2(\mathcal{O}_d)
\end{equation*}
are known as \demph{Bianchi groups}. We will restrict our attention to the Euclidean Bianchi groups, that is, we will assume that $d \in \{ 1,2,3,7,11 \}$ so that $\mathcal{O}_d$ is Euclidean. We fix the following notation throughout: 
\begin{equation*}
	 \omega =  \omega_d \coloneqq \begin{cases} 
	\sqrt{-1}, \qquad d=1, \\  
	\sqrt{-2}, \qquad d=2, \\
	\frac{1+\sqrt{-3}}{2}, \qquad d=3, \\ 
	\frac{1+\sqrt{-7}}{2}, \qquad d=7, \\
	\frac{1+\sqrt{-11}}{2}, \qquad d=11.
	\end{cases}
\end{equation*}
Note that $\{ 1, \omega_d \}$ forms a $\Z$-basis of $\mathcal{O}_d$.

Presentations for our Bianchi groups are well-known; we will use the following ones (see \cite{Fine_1989}). We put 
\begin{equation*}
S = \begin{pmatrix}0 & -1 \\ 1 & 0\end{pmatrix},\ \ T = \begin{pmatrix}1 & 1 \\ 0 & 1\end{pmatrix}, \ \ T_\omega = \begin{pmatrix}1 & \omega_d \\ 0 & 1\end{pmatrix}.
\end{equation*}
Additionally, set $U=TS = \begin{psmallmatrix}
1 & -1 \\ 1 & 0
\end{psmallmatrix}$. In all cases, $S$ has order 2 and $U$ has order 3. For the cases $d=1,3$, where $\omega_d$ is a root of unity, we will also use the matrix
\begin{equation*}
	L = \begin{pmatrix} \omega_d & 0 \\ 0 & \omega_d^{-1} \end{pmatrix} .
\end{equation*}
The matrix $L$ has order 2 when $d=1$, and order $3$ when $d=3$. Throughout we will use matrices to refer to their images in $\PSL_2(\mathcal{O}_d)$.

\subsubsection{The case $d=1$}
\begin{align*}
\Gamma_1 \simeq \langle S,T,T_\omega,L \mid &  ~ S^2 = L^2 = (SL)^2 = (TL)^2 = (T_\omega L )^2= \\ & ~ (TS)^3=(T_\omega SL)^3 = [T,T_\omega] = 1 \rangle.
\end{align*}
Here $[g,h]=ghg^{-1}h^{-1}$ denotes the commutator. 

\begin{remark}
	Once in matrix form it can be easily verified that $L$ is not strictly necessary for the presentation, as it can be written $ST_\omega^{-1}ST_\omega S T_\omega^{-1}$. However, this substitution makes the presentation more complicated, so we leave it as-is. 
\end{remark}

\subsubsection{The case $d=2$}
\begin{equation*}
	\Gamma_2 \simeq \langle S,T,T_\omega \mid  S^2 = (TS)^3 = [T,T_\omega] = [S,T_\omega]^2 = 1 \rangle.
\end{equation*}

\subsubsection{The case $d=3$}\label{d3_presentation}
\begin{align*}
	\Gamma_3 \simeq \langle S, T, T_\omega, L \mid & ~ S^2 = (TS)^3 = (SL)^2 = (T^{-1} T_\omega S L)^3 = L^3 = \\ 
	& ~  L^{-1}T^{-1}T_\omega L T^{-1} = L^{-1} T L T_\omega = [T,T_\omega] = 1 \rangle 
\end{align*}
We note that this presentation is slightly different than the one appearing in \cite{Fine_1989}. This difference comes from the different basis for $\mathcal{O}_3$ used, and amounts to replacing occurrences of $U$ in their presentation with $T^{-1}T_\omega$. Our choice of basis of $\mathcal{O}_3$ comes from matching conventions in use on the LMFDB \cite{LMFDB}.

\begin{remark}
	As in the case $d=1$, the generator $L$ is not strictly necessary, as it can be written $STS T_\omega S T_\omega ^{-1} S T^{-1}$, but for the same reason we leave it as-is. 
\end{remark}

\subsubsection{The case $d=7$}
\begin{equation*}
\Gamma_7 \simeq \langle S,T,T_\omega \mid  S^2 = (TS)^3 = [T,T_\omega] = (T_\omega^{-1}ST_\omega S T)^2 = 1 \rangle.
\end{equation*}

\subsubsection{The case $d=11$}
\begin{equation*}
\Gamma_{11} \simeq \langle S,T,T_\omega \mid  S^2 = (TS)^3 = [T,T_\omega] = (T_\omega^{-1}ST_\omega S T)^3 = 1 \rangle.
\end{equation*}

\subsection{The weight modules}
Let $V_k(R)$ denote the space of polynomials in two variables $X$ and $Y$ over a ring $R$, homogeneous of degree $k$. The space $V_k(\C)$ is a right $\GL_2(\C)$-module by the action 
\begin{equation*}
	X^{n-i}Y^i \cdot \begin{pmatrix}
	a & b \\ 
	c & d
	\end{pmatrix} = (aX+bY)^{k-i}(cX+dY)^i.
\end{equation*}
We will also write $V_k$ when the ring is clear from context. We will consider the modules
\begin{equation*}
	V_{k,k} \coloneqq V_k \otimes \overline{V_k}.
\end{equation*}
We realise this module as complex polynomials in four variables, $X$, $Y$, $\Xbar$ and $\Ybar$, homogeneous in the pairs $X$, $Y$ and $\Xbar$, $\Ybar$ of degree $k$. 
The bar on the second component signifies that the action of $\GL_2(\C)$ there is twisted by complex conjugation, i.e. 
\begin{equation*}
\Xbar^{n-i}\Ybar^i \cdot \begin{pmatrix}
a & b \\ 
c & d
\end{pmatrix} = (\overline{a}\Xbar+\overline{b}\Ybar)^{n-i}(\overline{c}\Xbar+\overline{d}\Ybar)^i.
\end{equation*}

The space $V_{k,k}$ comes equipped with a natural pairing, arising from the pairings on its component spaces. On $V_{k}$ we have the pairing given by 
\begin{equation*}
	(  X^{k-\alpha}Y^{\alpha}, X^{k-\beta}Y^\beta  ) \mapsto (-1)^\alpha \binom{k}{\alpha}^{-1} \delta_{\alpha + \beta,k}.
\end{equation*}
The space $\overline{V_{k}}$ has the analogous pairing, and their product gives the pairing $\langle \cdot ,\cdot \rangle$ on $V_{k,k}$
\begin{equation*}
	\langle X^{k-\alpha}Y^\alpha \Xbar^{k-\gamma}\Ybar^{\gamma} , X^{k-\beta}Y^\beta \Xbar^{k-\varepsilon}\Ybar^{\varepsilon}  \rangle = (-1)^{\alpha + \gamma } \binom{k}{\alpha}^{-1} \binom{k}{\gamma}^{-1} \delta_{\alpha+\beta,k} \delta_{\gamma + \varepsilon,k}.
\end{equation*}
It can alternatively be described by the  formula 
\begin{equation} \label{pairing}
	\left \langle (aX+bY)^k (e\Xbar+f\Ybar)^k, (cX+dY)^k(g\Xbar+h\Ybar)^k \right \rangle = (ad-bc)^k(eh-fg)^k,
\end{equation}
from which follows the relation 
\begin{equation}\label{duality}
	\langle  P \cdot g,Q  \rangle  = \langle P,  Q\cdot g^\iota \rangle,
\end{equation}
where $g\in \GL_2(\C)$ and $g^\iota = \det(g)g^{-1}$. In particular, the dual of $g\in \SL_2(\C)$ is $g^{-1}$, so the pairing is $\Gamma_d$-invariant for all our $d$. 

In Section 5 we will consider the module $V_{k,k}(F)$ for number fields $F/K$. This is a vector space over $K$, which receives the action of $\GL_2(K)$ in the same manner described above (we identify the non-trivial Galois automorphism of $K$ with complex conjugation).

\subsection{Second cohomology} \label{second-cohomology}
For later use, we present a description of the second cohomology groups of our Bianchi groups. These arise from spectral sequences based on geometric data, see  \cite{Schwermer_1983} and \cite{Sengun_exp}.  
\subsubsection{The case $d=1$} Let $E = T_\omega S L = \begin{psmallmatrix}
-1 & \omega \\ 
\omega & 0
\end{psmallmatrix}$, which has order 3. For any $\Gamma_1$-module $M$, we have
\begin{equation} \label{H2-d1}
	H^2(\Gamma_1,M) \simeq M / (M^S + M^{SL} + M^U + M^E).
\end{equation}

\subsubsection{The case $d=2$} Let $A = T_{\omega}^{-1} S T_{\omega} S = \begin{psmallmatrix}
1 & \omega \\ 
\omega & -1
\end{psmallmatrix}$, which has order 2, and satisfies $ A = T_\omega \overline{A} T_\omega^{-1}$. For any $\Gamma_2$-module $M$, we have 
\begin{equation} \label{H2-d2}
	H^2(\Gamma_2,M) \simeq M/( M^U + M^S + M^A(1-T_{\omega}^{-1}))
\end{equation}

\subsubsection{The case $d=3$} In $\Gamma_3$, both $SL$ and $LS$ have order 2. For any $\Gamma_3$-module $M$, we have 
\begin{equation}
	H^2(\Gamma_3,M) \simeq M / (M^{LS} + M^U + M^{SL}).
\end{equation}

\subsubsection{The case $d=7$}\label{d7_H2} Let $A = ST_\omega^{-1}ST_\omega T^{-1}$, which has order 2, and $g = S T_\omega^{-1}$, which satisfies $g^{-1}Ag = \overline{A}$. For any $\Gamma_7$-module $M$, we have 
\begin{equation}
	H^2(\Gamma_7,M) \simeq M / (M^{S} + M^U + M^{A}(1+g)).
\end{equation}

\subsubsection{The case $d=11$}\label{d11_H2} Let $A= ST_\omega^{-1}ST_\omega T^{-1}$, which has order 3, and $g=ST_\omega^{-1}$, which satisfies $g^{-1}Ag = \overline{A}$. For any $\Gamma_{11}$-module $M$, we have 
\begin{equation}
H^2(\Gamma_{11},M) \simeq M / (M^{S} + M^U  + M^{A}(1+g)).
\end{equation}

\subsection{Modular symbols}
We define the space $\mathcal{M}_2$ of \demph{weight $2$ modular symbols} to be the quotient of the complex vector space with basis given by the symbols $\{ \alpha, \beta \}$, for $\alpha,\beta \in \mathbb{P}^1(K)=K \cup \{ \infty \}$, by the subspace spanned by the elements of the form $\{ \alpha, \beta \} + \{ \beta, \alpha \}$ and $\{\alpha, \beta \} + \{ \beta, \gamma \} + 
\{ \gamma, \alpha \}$. Put more compactly; 
\begin{equation*}
	\mathcal{M}_2 := \C [ \{ \alpha, \beta \} ] \bigg  / \ \bigg \langle \{ \alpha, \beta \} + \{ \beta, \alpha \} , \ \{ \alpha, \beta \} + \{ \beta, \gamma \} + \{ \gamma, \alpha \} \bigg \rangle.
\end{equation*}
The group  $\PSL_2(K)$ acts naturally on $\mathcal{M}_2$ via the formula
\begin{equation*}
	\{\alpha,\beta\}\cdot g = \{ g^{-1}\alpha, g^{-1}\beta \}
\end{equation*}
where $g^{-1} \alpha$ and $g^{-1} \beta$ are the usual linear fractional transformations.

Theorem 2.6  of \cite{Mohamed} tells us\footnote{In our case, his $r=1$ and his $\sigma_1$ equals our matrix $S$.} that the linear map 
\begin{equation} \label{Manin-surjection} \Phi^k =\Phi^k_d: V_{k,k} \longrightarrow \left ( \mathcal{M}_2 \otimes_\C V_{k,k} \right )_{\Gamma_d},
\end{equation}
given by
\begin{equation*}
	\Phi^k(v)= [ \{ 0, \infty \} \otimes v ]
\end{equation*}
is a {\em surjection}. Here the right hand side is the space of coinvariants
\begin{equation*}
	\left ( \mathcal{M}_2 \otimes_\C V_{k,k} \right )_{\Gamma_d} = \mathcal{M}_2 \otimes_{\C[\Gamma_d]} V_{k,k} = (\mathcal{M}_2 \otimes_\C V_{k,k}) / \langle u - ua \mid a \in \C[\Gamma_d] \rangle,
\end{equation*}
with $\Gamma_d$ and $\C[\Gamma_d]$ acting diagonally on $ \mathcal{M}_2 \otimes_\C V_{k,k}$. Moreover, $ [ \{ 0, \infty \} \otimes v ]$ denotes the class of $\{ 0, \infty\} \otimes v$. For convenience, we will call the above map $\Phi^k_d$ the {\bf Manin surjection} of weight $k$ for $\Gamma_d$. Note that this is not standard terminology.

\begin{remark}\label{non_euc}
	It is here that we use the restriction to the Euclidean fields. The Manin surjection is defined using all the non-parabolic generators of the associated Bianchi group. For a non-Euclidean imaginary quadratic field, there are more non-parabolic generators than just $S$. For example, for $K=\Q(\sqrt{-19})$, there are two non-parabolic generators, and so the Manin surjection has $V_{k,k} \oplus V_{k,k}$ as its domain, per \cite[Sections 2.1, 2.2]{Mohamed}. In principle, it should be possible to extend the methods we use to the non-Euclidean cases, however we leave this as a possibility for future work. 
\end{remark}

\subsection{Borel-Serre duality} 
The (complex) {\em Steinberg module} ${\rm St}$ of an arithmetic group $G$ is the $\C[G]$-module $H^{{\rm vcd}}(G,\C[G])$ where ${\rm vcd}$ is the virtual cohomological dimension of $G$. The Steinberg module is the dualising module of $G$ in the sense that it implements the duality theorem of Borel-Serre \cite[Thm. 11.4.3]{Borel_Serre_corners}: for any $\C[G]$-module $M$, we have an isomorphism 
\begin{equation}  H_j(G, {\rm St} \otimes_\C M) \simeq H^{{\rm vcd} - j}(G, M).
\end{equation}
This isomorphism respects the action of Hecke operators. 

For Bianchi groups $\Gamma$, the virtual cohomological dimension is 2, and we get the following corollary: for all $k$
\begin{equation} \label{borel-serre} H_0(\Gamma, {\rm St} \otimes_\C V_{k,k}) \simeq H^2(\Gamma, V_{k,k}).
\end{equation}
We will use this fact below.

The Steinberg module is well-known to be related to modular symbols; in our case of Bianchi groups, the Steinberg module is isomorphic to the space of weight 2 modular symbols that we defined above (see e.g. \cite{Ash_94}, \cite[Def. 4]{Torrey_12}). We record this below.
\begin{lemma} \label{steinberg=modular} For our Bianchi groups $\Gamma$, we have an isomorphism ${\rm St} \simeq \mathcal{M}_2$ of $\Gamma$-modules.
\end{lemma}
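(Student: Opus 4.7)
The plan is to exhibit an explicit $\Gamma$-equivariant isomorphism by realising both modules as the augmentation ideal of the permutation module $\C[\mathbb{P}^1(K)]$, on which $\Gamma$ acts through its action on $\mathbb{P}^1(K)$ by linear fractional transformations, with the right-action convention $[\alpha]\cdot g := [g^{-1}\alpha]$ to match Section \ref{mod_symbs}.

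First I would construct a map $\mathcal{M}_2 \to I := \ker\bigl(\epsilon : \C[\mathbb{P}^1(K)] \to \C\bigr)$ by the assignment
\begin{equation*}
\{\alpha,\beta\}\longmapsto [\beta]-[\alpha].
\end{equation*}
Well-definedness is immediate: the antisymmetry relation $\{\alpha,\beta\}+\{\beta,\alpha\}$ maps to zero, and the three-term relation maps to the telescoping sum $([\beta]-[\alpha])+([\gamma]-[\beta])+([\alpha]-[\gamma])=0$. Surjectivity is clear since any degree-zero divisor is a sum of differences $[\beta_i]-[\alpha_i]$. Injectivity follows by choosing $\infty$ as a basepoint and writing any element of $\mathcal{M}_2$ as a linear combination of the symbols $\{\infty,\alpha\}$ (using the three-term relation), which maps bijectively onto the basis $\{[\alpha]-[\infty] : \alpha\in K\}$ of $I$. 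Equivariance is an immediate verification from the compatible definitions of the two right $\Gamma$-actions.

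Second I would identify $\mathrm{St}$ with the same augmentation ideal $I$. There are two standard routes. Geometrically, using the Borel-Serre compactification of hyperbolic three-space $\mathbb{H}^3$, whose boundary is a disjoint union of contractible horoball neighbourhoods indexed by the $\Gamma$-set of cusps $\mathbb{P}^1(K)$, the long exact sequence of the pair together with the contractibility of $\mathbb{H}^3$ gives a $\Gamma$-equivariant isomorphism between $H^2(\Gamma,\C[\Gamma])$ and the reduced zeroth homology of the boundary; the latter is precisely $I$. Intrinsically, one may instead invoke the Solomon-Tits theorem identifying $\mathrm{St}(\SL_2(K))$ with the reduced top homology of the spherical Tits building of $\SL_2$ over $K$, which in this rank-one case is just the discrete $\SL_2(K)$-set $\mathbb{P}^1(K)$; this route is the one taken by Ash in \cite{Ash_94} and Torrey in \cite{Torrey_12}. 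Either way one obtains the same $\Gamma$-equivariant identification $\mathrm{St}\simeq I$, and composing with the first step yields $\mathrm{St}\simeq\mathcal{M}_2$.

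The point requiring care, and the only real obstacle, is bookkeeping around conventions: one must verify that the action on $I$ arising from the geometric (or building-theoretic) construction of $\mathrm{St}$ matches, after the inversion $g\mapsto g^{-1}$ used in Section \ref{mod_symbs}, the action on $\mathcal{M}_2$, so that what one obtains is genuinely a $\Gamma$-equivariant isomorphism rather than merely an anti-equivariant one. This amounts to tracing through the identifications in \cite{Ash_94} and \cite{Torrey_12} (or the Borel-Serre cell structure) and recording signs; once done, the asserted isomorphism $\mathrm{St}\simeq\mathcal{M}_2$ follows.
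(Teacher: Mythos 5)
Your proposal is correct and is essentially the paper's approach: the paper gives no argument for this lemma beyond citing \cite{Ash_94}, \cite[Def.~4]{Torrey_12} and \cite[Section 4.1.1]{Mohamed}, and your two-step identification --- $\mathcal{M}_2$ with the degree-zero divisors on $\mathbb{P}^1(K)$ via $\{\alpha,\beta\}\mapsto[\beta]-[\alpha]$, and $\mathrm{St}$ with the same module via Solomon--Tits (the Tits building of $\SL_2/K$ being the discrete set $\mathbb{P}^1(K)$) or the Borel--Serre boundary --- is exactly the standard argument contained in those references. The only point you flag as remaining, the compatibility of the right-action conventions, is indeed just bookkeeping and does not affect the validity of the proof.
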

The details of this proof can be found in \cite[Section 4.1.1]{Mohamed}, although the definition of the Steinberg module is different to what we use here; the equivalence of the two definitions can be found in Section 1 of \cite{Ash_94}.

\subsection{The kernels} We will now argue that the kernel of the Manin surjection is exactly the subspace that was quotiented out in the previous explicit description of the second cohomology group in Section \ref{second-cohomology}. See also \cite[Prop. 1]{Torrey_12}.

We begin with a lemma. 
\begin{lemma} \label{fixed_in_kernel} Let $g \in \Gamma_d$ be an element of finite order, say $n$. Let $v \in V_{k,k}$ be fixed by $g$. Then $v$ lies in the kernel of $\Phi^k_d$.
\end{lemma}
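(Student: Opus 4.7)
My plan is to exploit the diagonal $\Gamma_d$-action on $\mathcal{M}_2 \otimes_\C V_{k,k}$ and average over the finite cyclic subgroup $\langle g \rangle$. In the coinvariants one has $[u \otimes w] = [u \cdot h \otimes w \cdot h]$ for any $h \in \Gamma_d$; setting $h = g^i$ and using $v \cdot g^i = v$ gives
\[
[\{0, \infty\} \otimes v] = [\{g^{-i}(0), g^{-i}(\infty)\} \otimes v]
\]
for every $0 \leq i < n$. Summing these identities and dividing by $n$ (invertible in $\C$) yields
\[
[\{0, \infty\} \otimes v] = \frac{1}{n} \bigg[ \bigg(\sum_{i=0}^{n-1}\{g^{-i}(0), g^{-i}(\infty)\}\bigg) \otimes v \bigg],
\]
reducing the task to showing that this symbol-sum annihilates $v$ in $(\mathcal{M}_2 \otimes V_{k,k})_{\Gamma_d}$.

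The next step is to evaluate the sum $\sum_{i=0}^{n-1} \{g^{-i}(0), g^{-i}(\infty)\}$ in $\mathcal{M}_2$ itself. By repeatedly inserting a pivot cusp such as $g^{-(i+1)}(\infty)$ into each summand via the three-term relation $\{\alpha, \beta\} + \{\beta, \gamma\} + \{\gamma, \alpha\} = 0$, and combining with the telescoping identity $\sum_{i=0}^{n-1} \{g^{-i}(p), g^{-(i+1)}(p)\} = \{p, g^{-n}(p)\} = \{p, p\} = 0$, I expect the whole sum to collapse to zero in $\mathcal{M}_2$ whenever the $\langle g \rangle$-orbit of $\{0, \infty\}$ is non-trivial. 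Direct computation confirms this for $g = S$ (where the two-term relation suffices) and $g = U$ (where the three-term relation closes the triangle $0 \to \infty \to 1 \to 0$), and more generally for any $g$ with $g^k(0) = \infty$ for some $k$; this covers conjugates of $S$, $U$ and the order-three elements $E$ and $A$ appearing in Section~\ref{second-cohomology}.

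The main obstacle is the case when $g$ fixes both $0$ and $\infty$ pointwise in $\mathbb{P}^1(K)$ --- that is, when $g$ is (conjugate in $\PSL_2(K)$ to) a diagonal matrix, as occurs for the generator $L$ in $\Gamma_1$ and $\Gamma_3$. In that case every summand of the orbit sum equals $\{0, \infty\}$, and the averaging argument degenerates into the tautology $n[\{0, \infty\} \otimes v] = n[\{0, \infty\} \otimes v]$. To handle this, I would enlarge the averaging group by bringing in a second finite-order element $h \in \Gamma_d$ which does move $\{0, \infty\}$, for instance $h = S$. The presentations recorded in Section~\ref{second-cohomology} ensure $\langle g, h \rangle$ is a finite subgroup in the relevant cases --- the Klein four group $\langle S, L \rangle$ in $\Gamma_1$ via $(SL)^2 = 1$, or the dihedral group of order six $\langle S, L \rangle$ in $\Gamma_3$ via $(SL)^2 = L^3 = 1$ --- so that decomposing $v$ into isotypic components under this larger finite group and averaging each piece over a non-trivial $h$-translated orbit should produce a genuine vanishing symbol-sum and close out the argument.
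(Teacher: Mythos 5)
Your first two paragraphs are, in substance, the paper's own proof. The paper writes $v = v'\cdot(1+g+\cdots+g^{n-1})$ and moves the norm element across the tensor sign in the coinvariants; unwinding this gives exactly your identity $n\,[\{0,\infty\}\otimes v] = \left[\left(\{0,\infty\}\cdot\textstyle\sum_{i}g^{i}\right)\otimes v\right]$, and both arguments then need the symbol $\{0,\infty\}\cdot(1+g+\cdots+g^{n-1})$ to vanish in $\mathcal{M}_2$. Under the isomorphism $\mathcal{M}_2\simeq \mathrm{Div}^0(\mathbb{P}^1(K))$ sending $\{\alpha,\beta\}\mapsto[\beta]-[\alpha]$, this sum equals $\sum_i([g^{-i}\infty]-[g^{-i}0])$ and vanishes precisely when $0$ and $\infty$ lie in a single $\langle g\rangle$-orbit --- the condition you isolated. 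That condition holds for every element to which the lemma is actually applied in the kernel computations: $S$, $SL$, $LS$, $U$ and $E$ all send $0$ to $\infty$. (It does \emph{not} hold for the elements $A$ of the cases $d=2,7,11$ --- for $d=2$ one has $A(0)=-\omega$ and $A(\infty)=-\omega/2$ --- so your parenthetical claim that the computation covers $A$ is incorrect; but there the kernel only contains $V_{k,k}^A(1-T_\omega^{-1})$, and one runs the same argument on the symbol $\{0,-\omega\}$, which $A$ does reverse.)

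The genuine gap is your final paragraph: the diagonal case cannot be repaired, because the lemma as stated is false for $g=L$ (and, trivially, for $g=1$). For $d=1$ the monomial $X^k\Xbar^k$ is fixed by $L$, yet it does not lie in $\ker(\Phi^k_1)$ for $k>0$: since $X^k\Xbar^k\cdot S = Y^k\Ybar^k$ one has $\Phi^k_1(Y^k\Ybar^k)=-\Phi^k_1(X^k\Xbar^k)$, so $2\,\Phi^k_1(X^k\Xbar^k)=\Phi^k_1(P)$ with $P=X^k\Xbar^k-Y^k\Ybar^k$; and $P$ is the Eisenstein period polynomial of Section 5.3, a nonzero element of $W_{k,k}=(\ker\Phi^k_1)^\perp$ satisfying $\langle P,P\rangle=-2\neq 0$, hence $P\notin\ker(\Phi^k_1)$. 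Your proposed averaging over $\langle S,L\rangle$ sees this obstruction directly: the isotypic component of $v\in V_{k,k}^L$ that is $L$-invariant and $S$-\emph{anti}-invariant gives $[\{0,\infty\}\cdot h\otimes v\cdot h]=[\{0,\infty\}\otimes v]$ tautologically for every $h$ in the group (the two sign changes cancel for $h=S$, and $L$ fixes both factors), so no vanishing relation is produced --- and $P$ is exactly such a vector. The correct statement of the lemma needs the hypothesis that $0$ and $\infty$ lie in the same $\langle g\rangle$-orbit; since $L$ itself never occurs among the fixed-space summands in Section \ref{second-cohomology} (only $SL$ and $LS$ do), nothing downstream is affected. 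Note that the paper's own proof, which passes from $[\{0,\infty\}\otimes v'B]$ to $[\{0,\infty\}\cdot A\otimes v'BA]$ without justification, silently glosses over exactly the point you located.
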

\begin{proof} Let us put $A=1-g$ and $B=(1+g+\ldots+g^{n-1})$ for convenience. Observe that since $BA=0$ in the group algebra $\C[\Gamma_d]$, the kernel of $A$, as a linear map on $V_{k,k}$ equals the image of $B$. As $v$ is fixed by $g$, it lies in the kernel of $A$ and hence there is $v'$ such that $v=v'B$. We have 
\begin{equation*}
	\Phi(v)= [\{ 0, \infty \} \otimes v] =[ \{ 0, \infty \} \otimes v'B ] = [ \{ 0, \infty \}\cdot A \otimes v' BA ] =[0]
\end{equation*}
as claimed.
\end{proof}

\subsubsection{The cases $d=1,3$.} First, we do $d=1$, by showing that 
\begin{equation} \label{kernel-d1} \ker(\Phi^k_1) = V_{k,k}^S + V_{k,k}^{SL} + V_{k,k}^U + V_{k,k}^E,
\end{equation}
where the right hand side is as in the right hand side of (\ref{H2-d1}). 

Since the elements $S$, $SL$, $U$, and $E$ are all of finite order, Lemma \ref{fixed_in_kernel} implies that the right hand side of (\ref{kernel-d1}) is a subspace of the left hand side. To show equality, we will argue that the have the same dimension. We first employ Lemma \ref{steinberg=modular} to view $\Phi_1^k$ as a surjection onto 
\begin{equation*}
	H_0(\Gamma_1, {\rm St} \otimes_\C V_{k,k}) \simeq  H_0(\Gamma_1, \mathcal{M}_2 \otimes_\C V_{k,k}) \simeq \left ( \mathcal{M}_2 \otimes_\C V \right )_{\Gamma_1}.
\end{equation*}
Now, it follows from the isomorphisms (\ref{borel-serre}) and (\ref{H2-d1}) that the dimension of 
$ \ker(\Phi^k_1)$ has to be equal to that of its subspace $V_{k,k}^S + V_{k,k}^{SL} + V_{k,k}^U + V_{k,k}^E$. 
For $d=3$, exactly the same logic applied to $LS$, $U$ and $SL$ gives the result.

\subsubsection{The cases $d=2,7,11$} These cases are essentially the same, only also relying on the following lemma.
\begin{lemma} \label{fixed_space_conj}
	Let $A,B,g \in \Gamma$ such that $g^{-1} B g = A $. Then $V^A = V^Bg$.  
\end{lemma}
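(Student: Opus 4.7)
The plan is to prove this by a direct double inclusion, using the single algebraic fact that $g^{-1}Bg = A$ is equivalent to $gA = Bg$ in the group algebra.

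First I would establish the inclusion $V^B g \subseteq V^A$. Take any $w \in V^B$, so $w \cdot B = w$. Then using associativity of the right action and the relation $gA = Bg$,
\begin{equation*}
(w \cdot g) \cdot A \;=\; w \cdot (gA) \;=\; w \cdot (Bg) \;=\; (w \cdot B) \cdot g \;=\; w \cdot g,
\end{equation*}
showing $w \cdot g \in V^A$.

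For the reverse inclusion $V^A \subseteq V^B g$, take $v \in V^A$ and set $w = v \cdot g^{-1}$. The relation $g^{-1}B = Ag^{-1}$ (obtained by right-multiplying $gA = Bg$ by $g^{-1}$ on both sides) gives
\begin{equation*}
w \cdot B \;=\; v \cdot (g^{-1} B) \;=\; v \cdot (A g^{-1}) \;=\; (v \cdot A) \cdot g^{-1} \;=\; v \cdot g^{-1} \;=\; w,
\end{equation*}
so $w \in V^B$ and $v = w \cdot g \in V^B g$, as desired.

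There is no real obstacle here: the whole content of the lemma is that conjugation in $\Gamma$ and translation of fixed subspaces by the same element match up, which is immediate from the right-module structure. The only care needed is to keep the ordering of the action straight (right actions), and to note that we are free to multiply by $g^{-1}$ since $g \in \Gamma \subseteq \GL_2(\C)$ acts invertibly on $V$.
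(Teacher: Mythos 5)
Your proof is correct and follows essentially the same route as the paper's: both reduce the claim to the identity $g^{-1}Bg = A$ (equivalently $Ag^{-1}=g^{-1}B$) and translate between fixed vectors of $A$ and of $B$ by acting with $g^{\pm 1}$. The only cosmetic difference is that you write out both inclusions explicitly, whereas the paper proves one and obtains the other by swapping $A \leftrightarrow B$ and $g \leftrightarrow g^{-1}$.
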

\begin{proof}
	First we show $V^A \subseteq V^B g$. Take $v\in V^A$. Then $v = v\cdot A = v\cdot g^{-1}Bg$. Multiplying by $g^{-1}$ on both sides, we get $v \cdot g^{-1}B = v\cdot g^{-1}$, i.e. $v\cdot g^{-1} \in V^B$, so $v \in V^B g$. The reverse inclusion follows the same argument, interchanging $A$ with $B$ and $g$ with $g^{-1}$. 
\end{proof}
For $d=2$, it is noted in Section \ref{H2-d2} that $ A = T_\omega \overline{A} T_\omega^{-1}$, and a quick check shows $A^2 = \overline{A}^2 = 1$, so $V_{k,k}^A(1-T_\omega^{-1}) = V_{k,k}^A + V_{k,k}^{\overline{A}}$ by Lemma \ref{fixed_space_conj}, and so the same argument as above shows 
\begin{equation*}
	\ker(\Phi^k_2) = V_{k,k}^S+ V_{k,k}^U + V_{k,k}^A(1-T_\omega^{-1}).
\end{equation*}
For $d=7,11$ the same argument works, again noting the relations between $A$ and $\overline{A}$ given in Sections \ref{d7_H2}, \ref{d11_H2}, giving 
\begin{equation*}
	\ker(\Phi^k_7) = V_{k,k}^S + V_{k,k}^U + V_{k,k}^A(1+ST_\omega^{-1})
\end{equation*} 
and
\begin{equation*}
	\ker(\Phi^k_{11}) = V_{k,k}^S + V_{k,k}^U + V_{k,k}^A(1+ST_\omega^{-1}).
\end{equation*}

%%%%%%%%%%%%%%%%%%%%%%%%%%%%%%%%%%%%
%%%%%%%%%%%%%%%%%%%%%%%%%%%%%%%%%%%%
\section{Period polynomials}
For our Euclidean Bianchi groups $\Gamma$ and weight modules $V_{k,k}$, let us consider the linear map 
\begin{equation} \label{eval_at_S_map} Z^1_{par}(\Gamma,V_{k,k}) \longrightarrow V_{k,k}, \qquad f \mapsto f(S)
\end{equation}
where the left hand side is the space of {\em parabolic} 1-cocycles; that is, those 1-cocycles $f : \Gamma \to V_{k,k}$ which vanish on the subgroup $\Gamma_\infty$ of elements in $\Gamma$ that stabilise the point $\infty \in \mathbb{P}^1(K)$ under fractional linear transformations. Observe that $\Gamma_\infty$ is simply the subgroup of upper triangular elements. The \demph{space $W_{k,k}$ of period polynomials} of weight $(k,k)$ for $\Gamma$ is defined as the subspace of $V_{k,k}$ given by the image of this map.

In \cite[Section 5]{Karabulut}, Karabulut gives explicit descriptions of the spaces of period polynomials for our Bianchi groups, which we recall.

\subsubsection{The case $d=1$}
In this case, we have 
\begin{equation} \label{d1_wkk}
	W_{k,k} = \ker(1+S) \cap \ker(1-L) \cap \ker(1+U+U^2) \cap \ker(1+E+E^2).
\end{equation}

\subsubsection{The case $d=2$}
In this case, we have 
\begin{equation}\label{d2_wkk}
	W_{k,k} = \ker(1+S) \cap \ker(1+U+U^2) \cap \ker(1+ST_\omega + T_\omega S + T_\omega^{-1} S T_\omega S).
\end{equation}

\subsubsection{The case $d=3$}In this case, we have 
\begin{equation}
	W_{k,k} = \ker(1+S) \cap \ker(1-L) \cap \ker(1+U+U^2) \cap \ker(1+E+E^2),
\end{equation}
where $E=T^{-1}T_\omega SL = \begin{psmallmatrix}
-1 & \omega -1 \\ 
\omega & 0
\end{psmallmatrix}$. As in Section \ref{d3_presentation}, we note that this definition is slightly different to that found in \cite{Karabulut}, again due to our choice of basis for $\mathcal{O}_3$. 

\subsubsection{The case $d=7$}In this case, we have 
\begin{equation}
	W_{k,k} = \ker(1+S) \cap \ker(1+U+U^2) \cap \ker(T + T_\omega S T + S T_\omega^{-1} S T_\omega+ ST_\omega).
\end{equation}

\subsubsection{The case $d=11$}In this case we have 
\begin{align*}
	W_{k,k} = & \ker(1+S) \cap \ker(1+U+U^2) \cap \\ & \ker(T + T_\omega S T + S T_\omega ^{-1} S T_\omega  + ST_\omega + T T_\omega^{-1}S T_\omega S T + ST_\omega T^{-1} S T_\omega^{-1} S T_\omega ).
\end{align*}

\subsection{Period polynomials and the Manin surjection} \label{period-Manin}
We now discuss the relationship between modular symbols and period polynomials. More precisely, we will prove that, for each of our Euclidean Bianchi groups, 
\begin{equation}
W_{k,k} = \left ( \ker(\Phi^k) \right ) ^\perp,
\end{equation}
where the right hand side denotes the subspace of all vectors in $V_{k,k}$ that are orthogonal under $\langle \cdot, \cdot \rangle$ to the kernel of Manin surjection $\Phi^k$ with respect to the pairing (\ref{pairing}). The proof in each case essentially boils down to checking certain relations (coming from the group algebra $\C[\Gamma_d]$) are satisfied. We will make copious use of the following basic fact from linear algebra:
\begin{lemma}\label{im_ker_swap}
	Let $F$ be an algebraically closed field of characteristic $0$, $V$ a finite-dimensional vector space over $F$, and $g$ an $F$-endomorphism of $V$ of finite order $n$. Then 
	\begin{equation*}
		\im(1-g) = \ker(1+g+\ldots + g^{n-1})
	\end{equation*}
	and 
	\begin{equation*}
		\ker(1-g) = \im(1 + g + \ldots + g^{n-1}). 
	\end{equation*}
\end{lemma}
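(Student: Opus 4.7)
The plan is to diagonalise $g$ and then read off both equalities eigenspace by eigenspace. Since $g^n = 1$, the minimal polynomial of $g$ divides $x^n - 1$, and because $F$ is algebraically closed of characteristic $0$, the polynomial $x^n - 1$ splits into $n$ distinct linear factors over $F$. Hence the minimal polynomial of $g$ is a product of distinct linear factors, so $g$ is diagonalisable.

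Given this, the first step is to write $V = \bigoplus_{\zeta^n = 1} V_\zeta$, where $V_\zeta = \ker(g - \zeta)$ is the $\zeta$-eigenspace. On each summand $V_\zeta$, the operator $1 - g$ acts as multiplication by the scalar $1 - \zeta$, and the operator $N := 1 + g + \cdots + g^{n-1}$ acts as multiplication by the scalar $1 + \zeta + \cdots + \zeta^{n-1}$. The latter equals $n$ when $\zeta = 1$ and equals $(\zeta^n - 1)/(\zeta - 1) = 0$ when $\zeta \neq 1$.

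From these scalar actions, everything follows by inspection. On $V_1$, the operator $1 - g$ is zero while $N$ is invertible (multiplication by $n \neq 0$); on each $V_\zeta$ with $\zeta \neq 1$, the operator $1 - g$ is invertible while $N$ is zero. Consequently $\ker(1 - g) = V_1 = \im(N)$ and $\ker(N) = \bigoplus_{\zeta \neq 1} V_\zeta = \im(1 - g)$, which gives both claimed identities.

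There is essentially no hard part here: the result is a clean consequence of the diagonalisability of finite-order operators in characteristic $0$. The only point that requires a moment's care is justifying the diagonalisation, which is why the hypotheses on the characteristic and algebraic closedness of $F$ are listed; in fact one only needs the minimal polynomial to be separable and to split, so the statement holds more generally, but the stated hypotheses are exactly what the paper will apply it under.
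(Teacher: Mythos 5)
Your proof is correct and complete. The paper itself offers no proof of this lemma --- it is labelled a ``basic fact from linear algebra'' --- and the closest it comes is the remark inside the proof of Lemma \ref{fixed_in_kernel}, where the identity $(1+g+\cdots+g^{n-1})(1-g)=0$ in the group algebra is taken to immediately give $\ker(1-g)=\im(1+g+\cdots+g^{n-1})$. That identity by itself only yields the containments $\im(N)\subseteq\ker(1-g)$ and $\im(1-g)\subseteq\ker(N)$, where $N=1+g+\cdots+g^{n-1}$; upgrading them to equalities needs an extra input, which you supply via diagonalisation: in characteristic $0$ the polynomial $x^n-1$ is separable, so over an algebraically closed field $g$ is diagonalisable with $n$-th roots of unity as eigenvalues, and both operators act as explicit scalars ($1-\zeta$ and $n\,\delta_{\zeta,1}$ respectively) on each eigenspace, from which both identities are read off. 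An alternative that avoids algebraic closure entirely is to use the averaging idempotent $\tfrac1n N$ together with the factorisation $n-N=(1-g)\sum_{i=0}^{n-1}(1+g+\cdots+g^{i-1})$, but your route is equally valid and arguably more transparent; your closing observation that only separability and splitting of the minimal polynomial are really needed is also accurate.
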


\subsubsection{The case $d=1$}
 Recall from (\ref{kernel-d1}) that the kernel of $\Phi^k_1$ is equal to $V_{k,k}^S + V_{k,k}^{SL} + V_{k,k}^U + V_{k,k}^E$ which we will denote ${\bf V}$ for compactness. We rewrite 
\begin{equation*}
	{\bf V} = \ker(1-S) + \ker(1-SL) + \ker(1-U) + \ker(1-E) ,
\end{equation*}
and note that it follows from (\ref{duality}) that the kernel of $T$ is orthogonal to the image of $T^*$ for any element $T \in \C[\Gamma_1]$. Therefore, we have
\begin{equation*}
	{\bf V}^\perp = \im(1-S^{-1}) \cap \im(1-(SL)^{-1}) \cap \im(1-U^{-1}) \cap \im(1-E^{-1}),
\end{equation*}
and using Lemma \ref{im_ker_swap}, we get 
\begin{equation*}
	{\bf V}^\perp = \ker(1+S) \cap \ker(1 + LS) \cap \ker(1 + U + U^2) \cap \ker(1+ E + E^2).
\end{equation*}
Recalling the explicit description (\ref{d1_wkk}) of $W_{k,k}$ as
\begin{equation*}
	W_{k,k} = \ker(1+S) \cap \ker(1-L) \cap \ker(1+U+U^2) \cap \ker(1+E+E^2),
\end{equation*}
we see that, to prove $W_{k,k}={\bf V}^\perp$, we just need to prove that every $P \in W_{k,k}$ satisfies $P\cdot (1+LS)= 0$, and that every $Q \in {\bf V}^\perp$ satisfies $ Q\cdot (1-L) = 0$. 

Let $P\in W_{k,k}$, so $P=-P\cdot S$ and $P= P\cdot L $. Noting that $SLS=L$, we find 
\begin{align*}
	P\cdot (1+LS) & = P + P\cdot LS  \\ 
				  & = P - P\cdot SLS  \\ 
				  & = P\cdot (1-L) = 0.
\end{align*}
Hence, $W_{k,k} \subseteq {\bf V}^\perp$, and the same argument in reverse shows that ${\bf V}^\perp \subseteq W_{k,k}$.

\subsubsection{The case $d=2$}We proceed in the same way as the previous case. The kernel of $\Phi_2^k$ is ${\bf{V}} =   V_{k,k}^S + V_{k,k}^U +V_{k,k}^{A}(1-T_\omega^{-1}) $, where $A = T_{\omega}^{-1} S T_{\omega} S$. We want to use the same trick as before to compute the orthogonal complement, but in this case there is the extra factor of $1-T_\omega^{-1}$ on the last term. So we rewrite $\ker(1-A)$ as $\im(1+A)$, to get 
\begin{equation}\label{d2_tricky_kernel}
	{\bf V} = \ker(1-S) + \ker(1-U) +\im(1+A-T_\omega^{-1} - AT_\omega^{-1})   .
\end{equation}
Then, taking complements and turning images to kernels with Lemma \ref{im_ker_swap}, we get
\begin{equation*}
	{\bf V}^\perp = \ker(1+S) \cap \ker(1+U+U^2) \cap  \ker(1 + A - T_\omega - T_\omega A).
\end{equation*}
Again recalling the explicit description of $W_{k,k}$ in (\ref{d2_wkk}), we see that proving $W_{k,k} = {\bf V}^\perp$ amounts to showing that $P\in W_{k,k}$ satisfies $P\cdot (1-T_\omega + T_{\omega}^{-1} S T_{\omega}S - S T_\omega S)=0$, and that $Q\in {\bf V}^\perp$ satisfies $Q \cdot (1+ST_\omega  + T_\omega^{-1}S T_{\omega} S+ T_\omega S)=0$. 

Since $P\cdot (1+S) = 0$, we can expand to get 
\begin{align*}
	P\cdot (1-T_\omega + ST_\omega^{-1} S T_\omega - S T_\omega S) & = P - P \cdot T_\omega + P\cdot S T_\omega^{-1}S T_\omega - P\cdot ST_\omega S \\ 
	& = P + P \cdot S T_\omega + P\cdot S T_\omega^{-1}S T_\omega+  P\cdot T_\omega S \\
	& = P\cdot (1 + S T_\omega +S T_\omega^{-1}S T_\omega+T_\omega S) = 0,
\end{align*}
and, since $Q\cdot (1+S) =0$ also, the same logic works in reverse to give $W_{k,k} = {\bf V}^\perp$.

\subsubsection{The case $d=3$}
Proceeding as before, writing ${\bf V} = \ker(\Phi^k_3)$, we get 
\begin{equation*}
	{\bf V}^\perp = \ker(1+LS) \cap \ker(1+SL) \cap \ker(1+U+U^2).
\end{equation*}
First, we show $W_{k,k} \subseteq {\bf V}^\perp$. This amounts to showing $P\cdot (1+SL) = P\cdot (1+LS) = 0$ for all $P\in W_{k,k}$. %We note a basic fact we will use here and in the other cases: an invertible element $M$ in $\Gamma_d$ has trivial kernel, so for any $D\in \C[\Gamma_d]$, $\ker(D) = \ker(DM)$. 
%\todo[inline]{because the group action is faithful? check what karabulut says about this }
To prove the first identity, we note that $P\cdot(1+S) = 0$, so $P\cdot (1+S)L=0$. Expanding and using that $P = P\cdot L$, we get $0 = P + P\cdot SL = P\cdot (1+SL)$. The second is proved in the same way, swapping the roles of $S$ and $L$. 

To show $ {\bf V}^\perp \subseteq W_{k,k}$, we must show 
\begin{align*}
	Q\cdot (1+S) = Q\cdot(1-L) = Q\cdot (1+E+E^2) = 0,
\end{align*}
for $Q\in {\bf V}^\perp$. If $Q$ \textit{is} in ${\bf V}^\perp$, it satisfies $Q\cdot SL = Q\cdot LS$, and since $LSL = S$ and $SL^2 = LS$, we have 
\begin{align*}
	Q\cdot (1+S) & = Q\cdot (1+LSL) \\ 
				 & = Q + (Q\cdot LS)\cdot L \\ 
				 & = Q + Q\cdot SL^2 = Q \cdot(1+LS) = 0.
\end{align*}
Using this, we also obtain 
\begin{align*}
	Q\cdot (1-L) & = Q - Q\cdot L \\
				 & = Q + Q \cdot SL = Q\cdot (1+SL) = 0.
\end{align*}
Finally, we show $Q\in \ker(1+E+E^2)$, where $E=T^{-1}T_\omega SL$. A quick computation shows $E$ can also be written $LTLS$, and $E^2 = LTSTLS$. Using $Q= Q\cdot L$, we get 
\begin{align*}
	Q\cdot(1+ E +E^2) & = Q + Q\cdot LTLS + Q\cdot LTSTLS \\ 
					  & = Q + Q\cdot TLS + Q\cdot (TS)TLS.
\end{align*}
Now we use that $Q\cdot TS = - Q - Q\cdot TSTS$ to get 
\begin{align*}
	Q \cdot (1+E+E^2) & = Q - Q\cdot TSTSTLS.
\end{align*}
Now, using that $(ST)^3=1$, as well as a final use of $Q = -Q\cdot S$, we get 
\begin{align*}
	Q \cdot (1+E+E^2) & = Q + Q\cdot STSTSTLS \\ 
					  & = Q + Q\cdot LS = Q\cdot (1+LS) = 0.
\end{align*}

\subsubsection{The case $d=7$} As before, write ${\bf V} = \ker(\Phi^k_7)$. We have 
\begin{equation*}
	{\bf V}^\perp = \ker(1+S) \cap \ker(1+U+U^2) \cap \ker(1 + T_\omega S + ST_\omega^{-1}ST_\omega T^{-1} + S T_\omega T^{-1}),
\end{equation*}
with the final kernel coming from $(V_{k,k}^A (1+g))^\perp = \ker((1+
g^{-1})(1+A))$, in the same manner as (\ref{d2_tricky_kernel}). Here $A = S T_\omega^{-1} S T_\omega T^{-1}$ and $g=ST_\omega^{-1}$. 

The only difference between ${\bf V}^\perp$ and $ W_{k,k}$ is the third kernel of each space. For $W_{k,k}$, this is $\ker(T+ T_\omega S T + S T_\omega^{-1} S T_\omega+ ST_\omega )$. But $T$ acts on $V_{k,k}$ injectively, so the two kernels are equal. 

\subsubsection{The case $d=11$} We have 
\begin{align*}
	{\bf V}^\perp = & \ker(1+S) \cap \ker(1+U+U^2) \cap \\ &  \ker(
	1 + T_\omega S + S T_\omega ^{-1} S T_\omega T^{-1} + ST_\omega T^{-1} + T T_\omega^{-1}S T_\omega S + ST_\omega T^{-1} S T_\omega^{-1} S T_\omega T^{-1}
	).
\end{align*}
Again, the final kernel comes from $(V_{k,k}^A(1+g)) ^\perp$. In this case it is equal to $\ker((1+g^{-1})(1+A+A^2))$, where $A= ST_\omega^{-1}ST_\omega T^{-1}$ and $g=ST_\omega^{-1}$. Comparing to the corresponding kernel for $W_{k,k}$, which is 
\begin{equation*}
	\ker(T + T_\omega S T + S T_\omega ^{-1} S T_\omega  + ST_\omega + T T_\omega^{-1}S T_\omega S T + ST_\omega T^{-1} S T_\omega^{-1} S T_\omega),
\end{equation*}
we again see the equality ${\bf V}^\perp = W_{k,k}$ from the injectivity of $T$.

\section{Hecke operators}
Recall that the space of modular symbols comes equipped with an action of Hecke operators; for a prime element $\pi \in \mathcal{O}$ (recall that $\mathcal{O}$ is Euclidean), we define an action
\begin{equation*}
\left (\{ \alpha, \beta \} \otimes v \right )\cdot T_\pi := \left (\{ \alpha, \beta \} \otimes v \right )\cdot \left( \left( \begin{smallmatrix} \pi &0 \\ 0 & 1 \end{smallmatrix}\right) + 
\sum_{\alpha \Mod \pi} \left (\{ \alpha, \beta \} \otimes v \right )\cdot \left ( \begin{smallmatrix} 1 & \alpha \\ 0 & \pi \end{smallmatrix}\right ) \right) .
\end{equation*}
for a weight $V_{k,k}$ modular symbol $\{ \alpha, \beta \} \otimes v \in \mathcal{M}_2 \otimes_\C V_{k,k}$. This action defines a linear map $T_\pi$ on the space $H_0(\Gamma, \mathcal{M}_2 \otimes_\C V_{k,k}) \simeq \left ( \mathcal{M}_2 \otimes_\C V_{k,k} \right )_{\Gamma}$.

\subsection{Heilbronn matrices}
Following \cite[Chapter 2]{Cremona-book} and \cite[Section 3.2]{Mohamed}, for a given prime element $\pi \in \mathcal{O}$, we define the associated set of {\bf Heilbronn matrices} as
\begin{equation*}
H_\pi = \left\{  \begin{pmatrix}
a & b \\ 
c & d
\end{pmatrix} \in M_2(\mathcal{O})~ \middle\vert ~  N(a) > N(b) \geq 0, N(d) > N(c) \geq 0, ad-bc = \pi   \right\}
\end{equation*}
where $N:K\to \Q$ is the norm map. Then, for $v \in V_{k,k}$, we define the operator $\widetilde{T}_\pi$ as  
\begin{equation*}
v \cdot \widetilde{T}_\pi := \sum_{g\in H_\pi} v\cdot g.
\end{equation*}

It is proven by Mohamed in \cite{Mohamed} that for any prime element $\pi \in \mathcal{O}$, we have a commutative diagram
\begin{equation*}
	\begin{tikzcd}
	{V_{k,k}} \arrow[r, "\Phi_d^k"] \arrow[d, "\tilde{T_{\pi}}"] & (\mathcal{M}_2\otimes V_{k,k})_{\Gamma_d} \arrow[d, "T_{\pi}"] \\
	{V_{k,k}} \arrow[r, "\Phi_d^k"]                              & (\mathcal{M}_2\otimes V_{k,k})_{\Gamma_d}                     
	\end{tikzcd}
\end{equation*}
where the horizontal arrows are the Manin surjection $\Phi^k_d$, see (\ref{Manin-surjection}). One can see this result as a transfer of the Hecke action from modular symbols to $V_{k,k}$. Alternatively we can say that when we equip $V_{k,k}$ with Heilbronn Hecke operators and modular symbols with the usual Hecke operators, the map $\Phi^k_d$ becomes {\em ``Hecke equivariant''}. 

Now, for each of the Euclidean $\Gamma_d$, we have computed the kernel of $\Phi^k_d$. The Hecke equivariance of $\Phi^k_d$ implies that these kernels are stabilised by the Heilbronn Hecke operators (since the trivial subspace of the right hand side is Hecke stabilised). Thus the action descends to the quotient and we obtain an isomorphism 
\begin{equation*}
	V_{k,k} / \ker(\Phi^k_d) \simeq \left ( \mathcal{M}_2 \otimes_\C V_{k,k} \right )_{\Gamma_d}
\end{equation*}
of Hecke modules.

\subsection{Heilbronn Hecke operators and period polynomials}
Recall that in Section \ref{period-Manin}, we showed that the space of period polynomials $W_{k,k}$ is the orthogonal complement of the kernel of the Manin surjection:
\begin{equation*}
	W_{k,k} = \ker(\Phi^k)^\perp,
\end{equation*}
with respect to the symmetric bilinear form (\ref{pairing}). Note that as (\ref{pairing}) is non-degenerate, we have
\begin{equation} \label{orthogonal-decoposition} V_{k,k} = W_{k,k} \oplus \ker(\Phi^k).
\end{equation}

Recall that for $g \in \GL_2(\C)$ and $v,w \in V_{k,k}$, we have
\begin{equation*}
	\langle v\cdot g, w\rangle = \langle v, w\cdot g^\iota\rangle
\end{equation*}
with $g^\iota = \det(g)g^{-1}$. Therefore, for a Heilbronn Hecke operator $\widetilde{T}_\pi$, we have
\begin{equation}\label{duality-2} 
	\langle v\cdot \widetilde{T}_\pi, w\rangle  = \langle v, w\cdot \widetilde{T}_\pi^*\rangle 
\end{equation}
where the operator $\widetilde{T}_\pi^*$ is given as
\begin{equation} \label{period-Heilbronn}
	w \cdot \widetilde{T}_\pi^* := \sum_{g\in H_\p} w\cdot g^\iota.
\end{equation}

Since the Heilbronn Hecke operators $\widetilde{T}_\pi$ stabilise the kernel of $\Phi^k$ and the space $W_{k,k}$ is orthogonal to this kernel, it follows from (\ref{duality-2}) that the {\em adjoint operators} $\widetilde{T}_\pi^*$ stabilise $W_{k,k}$. We record our discussion within the following corollary.
\begin{cor} For the Euclidean Bianchi groups, the space $W_{k,k}$ is equipped with an action of the adjoint Heilbronn Hecke operators as in (\ref{period-Heilbronn}). The canonical isomorphism
\begin{equation}
	\phi: W_{k,k} \simeq V_{k,k}/ \ker{\Phi^k_d}
\end{equation}
arising from the orthogonal decomposition (\ref{orthogonal-decoposition}) is Hecke equivariant, that is,
\begin{equation*}
	\phi(w\cdot \widetilde{T}_\pi^*) = \phi(w)\cdot \widetilde{T}_\pi
\end{equation*}
for any prime element $\pi \in \mathcal{O}_d$.
\end{cor}

\begin{remark}
We remark that all our discussion at this point could be adapted to the case of congruence subgroups of Bianchi groups. To compute with Bianchi forms of level $\Gamma_0(\n)$, one replaces the module $V_{k,k}$ with $V_{k,k} \otimes \mathbb{P}^1(\mathcal{O}/\n)$ (by an application of the Eckmann-Shapiro Lemma) in the above. This is done in detail in \cite{Mohamed}, and while we do not need it for our computations, we note it here for completeness.
\end{remark}

%%%%%%%%%%%%%%%%%%%%%%%%%%%%%%%%%%%%%%%%%%%%
%%%%%%%%%%%%%%%%%%%%%%%%%%%%%%%%%%%%%%%%%%%%
\section{Congruences}
In this section, we report on our numerical investigations into  the Hecke module structure of some spaces of Bianchi period polynomials. From the point of view of number theory, it is better to work with the group $\PGL_2(\mathcal{O})$, as only then the Hecke operators associated to the prime elements $\pi$ and $u\pi$ are the same for any unit $u \in \mathcal{O}^\times$, allowing one to associate Hecke operators to prime ideals. From the perspective of period polynomials, this amounts to computing the plus-subspaces that we now define.

\subsection{The plus-space}
Let $\varepsilon$ be a generator of the group $\mathcal{O}_d^\times$. The element $J=\begin{psmallmatrix} \varepsilon & 0 \\ 0 & 1 \end{psmallmatrix}$ normalises $\Gamma$ and thus gives rise to an involution\footnote{This involution is actually nothing but the Hecke operator associated to the double coset of $J$.} $\delta$ on cohomology and homology groups of $\Gamma$ which is well-known to commute with the Hecke operators. The involution $\delta$ on a 1-cocycle $f$ is given by 
\begin{equation*}
	\delta(f)(g)= f(JgJ^{-1})\cdot J.
\end{equation*}
When $\varepsilon=-1$, we have that $JSJ^{-1}=S$, and so the map (\ref{eval_at_S_map}) tells us that the corresponding involution on the space of Bianchi period polynomials is given by 
\begin{equation*}
	\delta(P)(X,Y,\overline{X},\overline{Y}) = P(X,Y,\overline{X},\overline{Y})\cdot J= P(-X,Y,-\overline{X},\overline{Y}).
\end{equation*}
When $\varepsilon \ne -1$ it is less obvious that $\delta$ is an involution, although it is---see the comment in Section 2.4 of \cite{Cremona_Whitley} for details. We define the \demph{plus-space} $W_{k,k}^+$ of Bianchi period polynomials to be the fixed subspace of the involution $\delta$ on $W_{k,k}$:
\begin{equation*}
	W_{k,k}^+ \coloneqq \{ P \in W_{k,k} \mid \delta(P)=P \}.
\end{equation*}
Notice that, as the involution $\delta$ on the space of Bianchi period polynomials commutes with the action of Heilbronn Hecke operators, the plus-space $W_{k,k}^+$ is stabilised under the Hecke action.

In \cite{Finis_etal}, the authors verified that, for the five Euclidean Bianchi groups, among all the spaces $S_{k+2}(\PGL_2(\mathcal{O}_d))$ of cuspidal Bianchi modular forms of full level $\PGL_2(\mathcal{O}_d)$ and weight $k+2$ within the scope given in the below table
\begin{center}
\begin{tabular}{|c|c|c|c|c|c|} \hline 
$-d$ & 1 & 2   &  3 & 7 & 11 \\  \hline
$k \leq$ & 104 & 141& 116 & 132 & 153  \\ \hline
\end{tabular}
\end{center}
there is a unique space which contains non-base-change (``genuine'') Bianchi modular forms; $S_{12}(\PGL_2(\mathcal{O}_{11}))$ is a 3-dimensional space, containing a pair $F_1,F_2$ of Galois conjugate genuine  forms, alongside the base-change lift of the classical discriminant modular form $\Delta$. We note that there is another space with non-zero genuine cuspidal dimension, for $d=-7$, $k=12$. However, these forms are in the ``minus space'' for the involution $\delta$, meaning they are forms of level $\PSL_2(\mathcal{O}_7)$, but not of level $\PGL_2(\mathcal{O}_7)$. This makes them less amenable to our purposes; in particular, due to their satisfying $\delta(P)=-P$, the first and last coefficients of their period polynomials are 0, preventing the use of the denominator method of Section \ref{BC_cong}. Further, their Hecke eigenvalues $a_\p$ are dependent on the choice of generator of the ideal $\p$ used, rather than the ideal itself, making them arithmetically less interesting. For these reasons, we make no further discussion of these forms.

In this section, using computer programs we developed to compute the spaces of Euclidean Bianchi period polynomials and the action of the associated Heilbronn Hecke operators, we exhibit two different congruences concerning the genuine Bianchi cuspforms living in $S_{12}(\PGL_2(\mathcal{O}_{11}))$; in the first, $F_1,F_2$ are congruent to (the base-change lift of) an Eisenstein series $E_{12}$ mod $173$, and in the second, they are congruent to $\Delta$ mod $43$. We summarise these congruences, along with a congruence between the base-change lifts of $\Delta$ and $E_{12}$, in a congruence graph (Figure \ref{FigCongGraph}), with vertices representing forms and edges representing moduli.
\begin{figure}[]\label{FigCongGraph}
	\centering
	\begin{tikzpicture}
	
	\coordinate[label=left:$A$]  (A) at (0,0);
	\coordinate[label=right:$B$] (B) at (4,0);
	\coordinate[label=above:$C$] (C) at (2,3.464);
	
	% the triangle
	\draw [line width=0.5pt] (A) -- (B) -- (C) -- cycle;
	% the circles representing the forms 
	\node[circle,draw=black, fill=white, inner sep=0pt,minimum size=35pt] (b) at (0,0) {$\Delta$};
	\node[circle,draw=black, fill=white, inner sep=0pt,minimum size=35pt] (b) at (4,0) {$E_{12}$};
	\node[circle,draw=black, fill=white, inner sep=0pt,minimum size=35pt] (b) at (2,3.464) {$F_1,F_2$};
	
	% the primes giving congruences, situated at the halfway points plus a distance of 0.5 
	% from the lines along the normal 
	\node[] (b) at (2,-0.5) {691};
	\node[] (b) at (0.5669,1.9820) {43};
	\node[] (b) at (3.4331,1.9820) {173};
	
	\end{tikzpicture}
	\caption{Congruence graph of Hecke eigenvalue systems captured in $S_{12}(\PGL_2(\mathcal{O}_{11}))$.}
\end{figure}

Included in the associated GitHub repo is a file, \texttt{check\_congruences.m}, which contains the Hecke eigenvalues for all primes of $K$ up to norm 1000, as well as code that checks they are congruent modulo the claimed primes. Using the method described in Section \ref{pol_congruences}, and carried out in Sections \ref{BC_cong}, \ref{genuine_cong}, \ref{cusp_cong}, this file also verifies that the period polynomials are congruent modulo the claimed primes.

These are, as far as we know, the first congruences in the literature concerning higher weight genuine Bianchi modular forms. A novelty concerning these congruences is that we ``detect'' them using Bianchi period polynomials, as we describe below. This is in analogy with the case of congruence subgroups of the classical modular group, as in, for example, \cite{Gaba_Popa}.

We will often refer to Hecke eigenvectors in the space of Bianchi period polynomials as Bianchi modular forms. This is justified under the embeddings discussed in the introduction and our results concerning the Hecke action.

\subsection{Cusp forms and algebraicity} Let us start the discussion by summarising some algebraicity results for our Bianchi period polynomials which follow from well-known results of Hida, see \cite[Sections 3, 6 and 8]{Hida_94}. 

Let $f \in S_{k+2}(\PGL_2(\mathcal{O}))$ be a level 1 Bianchi cusp form of weight $k+2$. As proven by Harder, and explicated by Hida, there is a $V_{k,k}(\C)$-valued harmonic differential 1-form $\omega_f$ on the arithmetic 3-fold $X_\Gamma$ (given by the orbit space of $\Gamma$ on the hyperbolic $3$-space $\mathcal{H}=\C \times \R^+$). The Eichler-Shimura-Harder map $\Theta$ mentioned in the introduction takes $f$ to the class of the 1-cocycle
\begin{equation*}
	\Theta(f): \Gamma \to V_{k,k}(\C), \qquad \gamma \mapsto \int_z^{\gamma\cdot z} \omega_f
\end{equation*}
by integrating the form $\omega_f$ over the geodesic from $z$ to $\gamma\cdot z$. Here $z$ is some fixed choice of point in $\mathcal{H}$ or in the rational boundary $\mathbb{P}^1(K)=K \cup \{ \infty \}$ of $\mathcal{H}$. Changing the point $z$ changes the cocycle by a coboundary. Note that, if we choose $z=\infty$, the cocycle we obtain is parabolic. 

Using the ``evaluation at $S$'' map of the introduction, we can obtain a period polynomial from a parabolic cocycle. The {\bf canonical\footnote{This polynomial is canonical because $\omega_f$ is canonical once $f$ is appropriately scaled.} period polynomial} associated to $f \in S_{k+2}(\PGL_2(\mathcal{O}))$ is defined as 
\begin{equation} 
	r_f := \int_0^\infty \omega_f \in W_{k,k}^+(\C).
\end{equation}
\begin{remark}\label{L_coeffs}
	As in the case of classical cusp forms, the coefficients of the canonical period polynomial of $f$ are related to the special values of the $L$-function of $f$: the critical values $L(f,s+1)$ with $0 \leq s \leq k$ appear in the coefficients of the diagonal terms $X^{k-s}Y^{s}\overline{X}^{k-s}\overline{Y}^{s}$, whereas off-diagonal terms are related to values of the twisted $L$-functions of $f$, see \cite[Thm. 2.11]{Williams_17} for details. 
\end{remark}
Now assume that $f$ is a simultaneous eigenvector for all the Hecke operators. It is a classical result that the field extension obtained by adjoining all the Hecke eigenvalues of $f$ to $K$ is a number field which we will denote by $K(f)$. It follows from results of Hida that there is a finite extension $F/K(f)$ and a complex period $\Omega_f \in \C^\times$ such that 
\begin{equation*}
	\tfrac{1}{\Omega_f} r_f \in W_{k,k}(F)^+.
\end{equation*}
Our computer programs compute the space $W_{k,k}(K)^+$, which is stabilised by Hecke operators. Since we have 
\begin{equation*}
	W_{k,k}^+(K) \otimes_K \C \simeq W_{k,k}^+(\C),
\end{equation*}
our Hecke operators capture all of the Hecke module information of $W_{k,k}^+(\C)$, and hence of $S_{k+2}(\PGL_2(\mathcal{O}))$. By suitably replacing the coefficient field $K$ with a finite extension (coming from the characteristic polynomials of the Hecke operators) we obtain {\bf algebraic} period polynomials (spanning a $K$-line) which realise the Hecke eigenvalue systems that we observe. %The principle that we follow is that the coefficients of these algebraic period polynomials will see the ``algebraic'' parts of the special values of the $L$-functions of Bianchi modular forms. However, a priori there seems no canonical way of scaling our algebraic period polynomials. Here we are guided by examples from the case of classical cusp forms, as explained below.

\subsection{Eisenstein series}
It is important to note that the space $S_{k+2}(\PGL_2(\mathcal{O}))$ does not account for all of $W_{k,k}^+(\C)$; there is a one-dimensional complement which ``sees'' an Eisenstein series. Here is one way to explain this.

We have established that there is a Hecke equivariant isomorphism
\begin{equation*}
	W_{k,k}^+(\C) \simeq H^2(\Gamma,V_{k,k}(\C));
\end{equation*}
in turn, it is well-known that there is also a Hecke equivariant isomorphism  
\begin{equation*}
	H^2(\Gamma,V_{k,k}(\C) \simeq S_{k+2}(\Gamma) \oplus \mathrm{Eis}_{k+2}(\Gamma),
\end{equation*}
where, for $k>0$, the latter space is the one-dimensional\footnote{For $k=0$, the latter space is trivial, see \cite[Prop. 1]{Rahm_Sengun}.} space generated by the Eisenstein series $E_{k+2}$ of weight $k+2$ associated to the single cusp of the arithmetic hyperbolic 3-fold $X_\Gamma$. 

We therefore see that there is a one-dimensional complement to the image of $S_{k+2}(\Gamma)$ in $W_{k,k}^+(\C)$, coming from $E_{k+2}$ and realising the eigenvalue system $\{ N(\p)^{k+1}+1 \}$. One can show that this one-dimensional complement is spanned by the period polynomial $X^k\overline{X}^k - Y^k\overline{Y}^k$, corresponding to the image of the one-dimensional space of parabolic coboundaries $B^1_{par}(\Gamma,V_{k,k}(\C)) \subset Z^1_{par}(\Gamma, V_{k,k}(\C)) \to W_{k,k}(\C)$.

This is the Bianchi counterpart of the classical period polynomial $X^k-Y^k$, which is well-known (going back to \cite{Manin_1973}, see also \cite[Thm. 1]{Kohnen_Zagier_1984}) to realise the Eisenstein eigenvalue system $\{ p^{k+1}+1 \}$. We will not prove as we do not need it; however we will see that it holds in the specific Bianchi period polynomial space that we will compute with in the rest of the paper.

\subsection{Congruences of period polynomials}\label{pol_congruences}
Two algebraic period polynomials $r_f$ and $r_g$ each have their own field of definition, $K(f)$ and $K(g)$ respectively. Both polynomials can be defined over the compositum of these fields, $K(f,g) \coloneqq K(f)K(g)$, and can therefore be scaled to have coprime\footnote{Meaning the ideals generated by the coefficients are coprime.} coefficients in the ring of integers $\mathcal{O}_{f,g}$ of $K(f,g)$. We write $R_f$ and $R_g$ for the integrally scaled polynomials. For any prime ideal $\p$ of $\mathcal{O}_{f,g}$, there is a mod $\p$ reduction map 
\begin{equation*}
	W_{k,k}(\mathcal{O}_{f,g}) \to W_{k,k}(\F_\p)
\end{equation*}
induced by the map reduction mod $\p$ map $\mathcal{O}_{f,g} \to \F_\p \simeq \mathcal{O}_{f,g}/\p$. The polynomials $R_f$ and $R_g$ are \demph{congruent mod $\p$} if their images in $W_{k,k}(\F_\p)$ are equal. When two polynomials are congruent and neither of their images is the zero polynomial in $W_{k,k}(\F_\p)$, their Hecke eigenvalues are automatically congruent modulo $\p$, as the reduction map is Hecke equivariant. We will use this fact to prove the existence of congruences between Bianchi modular forms of level 1 in the rest of this section.

\subsection{The space}
For the remainder of this section, we set $K=\Q(\sqrt{-11})$ and $\Gamma=\PGL_2(\mathcal{O}_{11})$. As expected, we compute that the space 
\begin{equation*}
	W\coloneqq W_{10,10}^+(K) \otimes_K \C \simeq W_{10,10}^+(\C)
\end{equation*}
is 4-dimensional, consisting of the base-change of the classical cusp form $\Delta$, which we still denote by $\Delta$, the Eisenstein series $E_{12}$, which is the base-change of the classical level 1 weight 12 Eisenstein series, and two genuine Bianchi cusp forms $F_1,F_2$. The genuine forms have Hecke eigenvalues in the field $L = \Q(\beta)$, where $\beta = \sqrt{81829}$, and are conjugate by the non-trivial automorphism of $L$. We note that the primes that ramify in $L$ are exactly $\{ 11,43,173 \}$. We record the Hecke eigenvalues of the forms for the first few primes of $K$ in Figure \ref{eigenvalues}. Note that the Hecke eigenvalues for the genuine forms agree with those computed in \cite[Section 6.2.2]{Finis_etal}.

\begin{figure}[]
	\centering
	\begin{table}[H]\renewcommand{\arraystretch}{1.1}
		\begin{tabular}{c||c|c|c|c|c}
			$\p$   & $\omega$ & $1-\omega$ & $2$ & $1+\omega$ & $2-\omega$   \\ \hline
			$N(\p)$ & 3 & 3 & 4 & 5 & 5 \\ \hline \hline
			$a_\p(\Delta)$ &  252   &  252     & -3250  &  4830     &  4830           \\ \hline 
			$a_\p(E_{12})$   &  177148   &    177148   & 4194305  &   48828126    &   48828126   \\ \hline 
			$a_\p(F_1)$& $\beta-350$ &     $-\beta - 350$ & $-80$ & $26\beta - 5103$ & $-26\beta - 5103$  \\ \hline 
			$a_\p(F_2)$& $-\beta - 350$ &  $\beta-350$    & $-80$ & $-26\beta - 5103$  & $26\beta - 5103$ 
		\end{tabular}\renewcommand{\arraystretch}{1}
	\end{table}
	\caption{Hecke eigenvalue systems captured in $S_{12}(\PGL_2(\mathcal{O}_{11}))$.}\label{eigenvalues}
\end{figure}

\subsection{Congruence between $\Delta$ and $E_{12}$}\label{BC_cong}
It immediately follows from the recipe\footnote{Which can be found, e.g. here \href{https://www.lmfdb.org/knowledge/show/mf.bianchi.base_change}{\texttt{https://www.lmfdb.org/knowledge/show/mf.bianchi.base\_change}}} describing the behaviour of Hecke eigenvalues under base-change lifting that the famous congruence mod $691$ between the classical modular forms $\Delta$ and $E_{12}$ continues to hold between their base-change lifts.

The eigenvalue system associated to $E_{12}$ cuts out a one-dimensional subspace of $W_{10,10}^+(K)$ which we see to be spanned by $X^{10}\overline{X}^{10} - Y^{10}\overline{Y}^{10}$.

Now consider the one-dimensional subspace of $W_{10,10}^+(K)$ cut out by (the eigenvalue system associated to) $\Delta$. We pick any non-zero Bianchi period polynomial in this one-dimensional subspace, and scale it\footnote{In fact, \textsf{Magma} by default gives us an eigenvector whose leading coefficient is $1$.} so that the first coefficient (i.e. the coefficient of the monomial $X^{10}\overline{X}^{10}$) is $1$.  
Let us call this polynomial $P$ and write 
\begin{equation*}
	P= \sum_{0 \leq i,j \leq k} c_{i,j} X^{10-i}Y^i\Xbar^{10-j}\Ybar^j.
\end{equation*}
We represent $P$ as an $11\times 11$ coefficient matrix $(c_{i,j})_{0 \leq i,j \leq 10}$:

\begin{figure}[H]
	\centering
$\begin{psmallmatrix}
1 & 0 & \frac{55280}{31203} & 0 & \frac{5528}{10401} & 0 & \frac{3455}{55472} & 0 & \frac{691}{218421} & 0 & 0\\
0 & -\frac{7082750}{1965789} & 0 & -\frac{193480}{93609} & 0 & -\frac{34550}{93609} & 0 & -\frac{65645}{2246616} & 0 & 0 & 0\\
\frac{55280}{31203} & 0 & \frac{4892971}{1497744} & 0 & \frac{79465}{93609} & 0 & \frac{8983}{93609} & 0 & 0 & 0 & -\frac{691}{218421}\\
0 & -\frac{193480}{93609} & 0 & -\frac{21044405}{15726312} & 0 & -\frac{13820}{93609} & 0 & 0 & 0 & \frac{65645}{2246616} & 0\\
\frac{5528}{10401} & 0 & \frac{79465}{93609} & 0 & \frac{156857}{499248} & 0 & 0 & 0 & -\frac{8983}{93609} & 0 & -\frac{3455}{55472}\\
0 & -\frac{34550}{93609} & 0 & -\frac{13820}{93609} & 0 & 0 & 0 & \frac{13820}{93609} & 0 & \frac{34550}{93609} & 0\\
\frac{3455}{55472} & 0 & \frac{8983}{93609} & 0 & 0 & 0 & -\frac{156857}{499248} & 0 & -\frac{79465}{93609} & 0 & -\frac{5528}{10401}\\
0 & -\frac{65645}{2246616} & 0 & 0 & 0 & \frac{13820}{93609} & 0 & \frac{21044405}{15726312} & 0 & \frac{193480}{93609} & 0\\
\frac{691}{218421} & 0 & 0 & 0 & -\frac{8983}{93609} & 0 & -\frac{79465}{93609} & 0 & -\frac{4892971}{1497744} & 0 & -\frac{55280}{31203}\\
0 & 0 & 0 & \frac{65645}{2246616} & 0 & \frac{34550}{93609} & 0 & \frac{193480}{93609} & 0 & \frac{7082750}{1965789} & 0\\
0 & 0 & -\frac{691}{218421} & 0 & -\frac{3455}{55472} & 0 & -\frac{5528}{10401} & 0 & -\frac{55280}{31203} & 0 & -1\\
\end{psmallmatrix}$
\end{figure}

Guided by our desire to find a congruence with $E_{12}$, we let $D$ denote the gcd of all the rational numbers that appear as ``middle coefficients'' (i.e. all the coefficients except for the first $(i,j)=(0,0)$ and the last $(i,i)=(10,10)$). That is, 
\begin{equation*}
	D= \dfrac{\gcd \big (\{ {\rm numerator}(c_{i,j}) \mid (i,j)\neq (0,0),(10,10)\} \big )}{\mathrm{lcm} \big (\{ {\rm denominator}(c_{i,j}) \mid (i,j)\neq (0,0),(10,10) \} \big )} = \frac{691}{31452624},
\end{equation*}
so that when we scale $P$ by $1/D$, the middle coefficients all become integers with greatest common divisor 1, giving the scaled period polynomial 
\begin{figure}[H]
	\centering
	$\begin{psmallmatrix}
		\frac{31452624}{691} & 0 & 80640 & 0 & 24192 & 0 & 2835 & 0 & 144 & 0 & 0 \\
		0 & -164000 & 0 & -94080 & 0 & -16800 & 0 & -1330 & 0 & 0 & 0 \\
		80640 & 0 & 148701 & 0 & 38640 & 0 & 4368 & 0 & 0 & 0 & -144 \\
		0 & -94080 & 0 & -60910 & 0 & -6720 & 0 & 0 & 0 & 1330 & 0 \\
		24192 & 0 & 38640 & 0 & 14301 & 0 & 0 & 0 & -4368 & 0 & -2835 \\
		0 & -16800 & 0 & -6720 & 0 & 0 & 0 & 6720 & 0 & 16800 & 0 \\
		2835 & 0 & 4368 & 0 & 0 & 0 & -14301 & 0 & -38640 & 0 & -24192 \\
		0 & -1330 & 0 & 0 & 0 & 6720 & 0 & 60910 & 0 & 94080 & 0 \\
		144 & 0 & 0 & 0 & -4368 & 0 & -38640 & 0 & -148701 & 0 & -80640 \\
		0 & 0 & 0 & 1330 & 0 & 16800 & 0 & 94080 & 0 & 164000 & 0 \\
		0 & 0 & -144 & 0 & -2835 & 0 & -24192 & 0 & -80640 & 0 & -\frac{31452624}{691} 
	\end{psmallmatrix}$
	\caption{The ``normalised'' algebraic period polynomial of $\Delta$ over $\Q(\sqrt{-11})$.}
\end{figure}
In analogy with the classical case (see \cite[Section 7.1]{Manin_1973}), we expect the denominator of the first (or last) coefficient to give the modulus of a congruence. In this case we see a congruence modulo 691, and a 691 in the denominator.

This scaling gives the normalised algebraic period polynomial $r_\Delta$. To detect a congruence with the Eisenstein series $E_{12}$, we need the \textit{integrally} scaled polynomials $R_\Delta$ and $R_{E_{12}}$. We obviously have $R_{E_{12}} = r_{E_{12}} = X^{10}\overline{X}^{10} - Y^{10}\overline{Y}^{10}$, and $R_\Delta = 691r_\Delta$. 

From the above representation of $r_\Delta$, it is clear every coefficient of $R_\Delta$ except the first and last is divisible by 691. So the reduction of $R_\Delta$ modulo 691 is
\begin{equation*}
	R_\Delta \equiv 377X^{10}\overline{X}^{10} - 377Y^{10}\overline{Y}^{10} \equiv 377 R_{E_{12}} \Mod{691}.
\end{equation*}
Thus (although we already knew this), we have 
\begin{equation*}
	\Delta \equiv E_{12} \Mod{691}.
\end{equation*}

\begin{remark} \label{symmetry}
	The representation of our Bianchi period polynomials as square matrices has no particular significance; the polynomial is a vector in the space $W_{10,10}(K)^+$ and has no reasonable interpretation as a matrix that we know of. It is an aesthetic decision, as this representation fits more easily on the page, and its symmetries reflect some obvious symmetries of the polynomial under the action of $\PSL_2(\mathcal{O})$. For example, the matrix $S = \begin{psmallmatrix} 0  & -1 \\ 1 & 0\end{psmallmatrix}$ acts on polynomials by $P(X,Y,\Xbar,\Ybar) \cdot S = P(-Y,X,-\Ybar,\Xbar)$, which flips the matrix along the horizontal and vertical lines of symmetry and negates entries $(i,j)$ with $i+j$ odd. Since $r_\Delta \cdot (1-J) =0$, where $J = \begin{psmallmatrix} -1 & 0 \\ 0 & 1\end{psmallmatrix}$, all such entries are 0. 
\end{remark}

\begin{remark} \label{only_diagonal} It may be worth noting that it appears to be possible to detect the prime $691$ using only the ``diagonal'' coefficients of the period polynomial, i.e. those corresponding to the monomials $X^{10-i}Y^i\Xbar^{10-i}\Ybar^i$, for $0\leq i \leq 10$. Per Remark \ref{L_coeffs}, these correspond to critical values of the $L$-function of $\Delta$, and come from \textsf{Magma} unscaled as 
\begin{equation*}
	\left(1,	-\tfrac{7082750}{1965789},\tfrac{4892971}{1497744},-\tfrac{21044405}{15726312},
	\tfrac{156857}{499248},0,-\tfrac{156857}{499248},	\tfrac{21044405}{15726312},-\tfrac{4892971}{1497744},	\tfrac{7082750}{1965789},-1\right).
\end{equation*}
Taking gcds as before reproduces the previous scaling, again putting a 691 in the denominator of the first and last coefficients.
\end{remark}

\subsection{Congruence between the genuine cusp forms and $E_{12}$}\label{genuine_cong}
In the same manner as the base-change congruence, our goal is to compute the normalised period polynomial $r_{F_1}$ and use it to detect a congruence with $E_{12}$. As the Hecke eigenvalues of $F_1$ live in the quadratic field $L = \Q(\sqrt{81829})$, any algebraic eigen-polynomial $P$ realising the Hecke eigenvalue system of $F_1$ will live in $W_{10,10}(L')^+$, where $L'$ is the compositum $LK$, a field of degree 4. We compute one such polynomial $P$ and as before write
\begin{equation*}
P= \sum_{0\leq i,j \leq 10}c_{i,j}X^{10-i}Y^i\Xbar^{10-j}\Ybar^{j}.
\end{equation*}
Again, we scale $P$ so that its leading coefficient is $1$. In our next step, we would like to take the gcd of all the middle coefficients, however, since these coefficients lie in the number field $L'$ whose class number\footnote{The field $L'$ has class number 116.} is not 1, we have to consider the gcd\footnote{Here the notion of gcd for fractional ideals is the straightforward generalisation of the same for rational numbers used in Section \ref{BC_cong}.} of the fractional ideals generated by the middle coefficients:
\begin{equation*}
	\mathcal{D} = \dfrac{\gcd \big (\{ \langle {\rm numerator}(c_{i,j}) \rangle \mid (i,j)\neq (0,0),(10,10)\} \big )}{\mathrm{lcm} \big (\{ \langle {\rm denominator}(c_{i,j}) \rangle \mid (i,j)\neq (0,0),(10,10) \} \big )}.
\end{equation*}

We find that the fractional ideal $\mathcal{D}$ is principal, generated by the element
\begin{equation*}
	D=(245047560419778865\cdot T^3 + 491449950388685970467\cdot T)/15099638400\in L',
\end{equation*}
where $T$ is a root of the polynomial $x^4 + 3725 x^2 + 3448449$ so that $\Q(T) \simeq L'$. Now multiplying our period polynomial $P$ by $1/D$, we obtain the normalised algebraic period polynomial $r_{F_1}$, whose middle coefficients all lie in the ring of integers of $L'$ and generate ideals that are all pairwise coprime. 

The key observation now is that the leading coefficient of our final polynomial is $1/D$, which has norm 
\begin{equation*}
	N_{L'/\Q}(1/D) = \dfrac{2^{28} \cdot 3^4 \cdot 5^8 \cdot 7^4 \cdot 11^8}{173^2}.
\end{equation*}

This indicates the existence of a congruence modulo 173. To detect such a congruence at the level of period polynomials, we need to scale $r_{F_1}$ such that \textit{all} its terms are integral and coprime. A quick calculation shows that 
\begin{equation*}
	D = \alpha/8131200,
\end{equation*}
where $\langle \alpha \rangle$ is the unique prime of $L'$ of norm $173^2$, and so $R_{F_1} = \alpha r_{F_1}$. As we saw for the previous congruence, all the middle coefficients of $R_{F_1}$ are divisible by $\alpha$, so modulo $\langle \alpha \rangle$, $R_{F_1}$ is in the space generated by $R_{E_{12}}$. Writing $\p_{173}$ for the unique prime of norm 173 in $L$, another quick calculation shows that $\langle \alpha \rangle \cap \mathcal{O}_L = \p_{173}$, i.e. $\p_{173}$ is inert in $L'/L$. In particular, we have 
\begin{equation*}
	R_{F_1} \equiv 27 R_{E_{12}} \Mod{\langle \alpha \rangle }.
\end{equation*}
From this we conclude that 
\begin{equation*}
	F_1 \equiv E_{12} \Mod{\p_{173}}.
\end{equation*}

\begin{remark} As in Remark \ref{only_diagonal}, we also examine the unscaled diagonal entries of $F_1$:
\begin{equation*}
	    \left(1,-\frac{3287}{924},\frac{42039}{13552},-\frac{173}{154},	\frac{173}{968},0,-\frac{173}{968},\frac{173}{154},-\frac{42039}{13552},\frac{3287}{924},-1\right)
\end{equation*}
Surprisingly, all the diagonal coefficients are rational, and scaling them as before, we get 
\begin{equation*}
	\left( 
	\frac{40656}{173},-836,	729,-264,42,0,-42,264,-729,	836,-\frac{40656}{173}
	\right).
\end{equation*}
Although this choice of scaling does \textit{not} reproduce the scaling obtained from considering the whole polynomial, we do see the factor of 173 reappearing the denominator of the first and last terms. We don't know if scaling via the diagonal coefficients should always be sufficient to detect a congruence, or if one can expect there to be a choice of scaling that makes the diagonal rational in all cases. 
\end{remark}

The interested reader can find all the unnormalised algebraic period polynomials (including those corresponding to $F_1$ and $F_2$, which we cannot display here due to their size) in the associated GitHub repository, in the file \texttt{Q11\_periodPols.m}.

\subsection{Congruence between the genuine cusp forms and $\Delta$}\label{cusp_cong}
Finally, we look to detect congruences between the genuine cusp forms $F_1,F_2$ and the lifted form $\Delta$. It is a well-known result of Hida that such congruences are controlled by the size of a certain ``congruence module'' which, by a result of Urban \cite{Urban_95} for Bianchi cusp forms, is captured by the algebraic part of the value of the adjoint $L$-function at $s=1$ (which is obtained by factoring out suitable pair of complex periods).

Further, it is well-known that the adjoint $L$-function at $s=1$ is closely related to the Petersson norm. In turn, a result of Haberland \cite{Haberland}, see also  \cite{Kohnen_Zagier_1984, Pasol-Popa}, tells us that one can compute the Petersson norm of a classical cusp form via its canonical period polynomial. This motivates us to try to detect congruences between our genuine cusp forms and $\Delta$ using our Bianchi period polynomials.

Let $f \in S_{k+2}(\Gamma)$ be a Bianchi cusp form and let $\omega_f$ be its $V_{k,k}(\C)$-valued harmonic differential form on the associated hyperbolic 3-fold $X_\Gamma$. Recall that the Petersson norm of $f$ is given by
\begin{equation*}
	(f,f)=\int_{X_\Gamma} \omega_f \wedge {}^*\omega_f
\end{equation*}
where ${}^*\omega_f$ is the harmonic $2$-form given by the Hodge-star of $\omega_f$. 

We believe that a suitable analogue of Haberland's formula which expresses the Petersson norm of $f$ in terms of its canonical period polynomials ought to exist, however our preliminary efforts failed to establish such a formula. Instead, we have developed a computational approach that we believe captures the algebraic part of the Petersson norm. We explain this approach now. 

Any Bianchi cusp eigenform $f \in S_{k+2}(\Gamma)$ gives rise to a class in $H^1(\Gamma,V_{k,k}(\C))$ as we have discussed earlier, but also to a class\footnote{The description of the $V_{k,k}(\C)$-valued harmonic differential 2-form associated to $f$ can be found in \cite[Sec. 3]{Hida_94}.} in $ H^2(\Gamma,V_{k,k}(\C))$. As previously discussed, an algebraic period polynomial can be associated to the class of $f$ in $H^1$. For the second cohomology class, we use the fact that there is a Hecke-module isomorphism 
\begin{equation*}
	H^2(\Gamma,V_{k,k}(\C)) \simeq V_{k,k}(\C) / {\bf V}(\C).
\end{equation*}
(see Section \ref{second-cohomology}). Using computer programs that we developed, we can compute an algebraic vector $v_f$ in $V_{k,k}(K(f))$ such that the class of $v_f$ in $V_{k,k}(K(f)) / {\bf V}(K(f))$ realises the Hecke eigenvalue system of $f$. Thus, the class of $v_f$ corresponds to the cohomology class associated to $f$ in $H^2$. We then take the pairing $\langle r_f, v_f \rangle$. We note that the value of $\langle r_f, v_f \rangle $ does not change if we replace $v_F$ with $v_F+{\bf v}$ for any ${\bf v} \in {\bf V}$, since ${\bf V}$ and $W_{k,k}$ are orthogonal under $\langle \cdot,\cdot\rangle$. Our expectation is that the algebraic quantity $\langle r_f, v_f \rangle $ captures the algebraic part of the Petersson norm of $f$.

We now apply this strategy to the forms in $W_{10,10}(\PGL(\mathcal{O}_{11}))$, looking first at $\Delta$. We have already written down a normalised algebraic period polynomial corresponding to $\Delta$ in Section \ref{BC_cong}. We also compute an algebraic vector $v_\Delta$ in $V_{10,10}(K)$ whose class in $V_{10,10}(K) / {\bf V}(K)$ realises the Hecke eigenvalue system of $\Delta$:
\begin{equation*}
v_\Delta = -358X^{10}\Xbar^8\Ybar^2 + 3080X^{10}\Xbar^6\Ybar^4 + 22253X^{10}\Xbar^4\Ybar^6.
\end{equation*}
It is not clear exactly how to normalise $v_\Delta$. For lack of a better convention, we have stuck to eliminating denominators and making the gcd of all (except the first and last) coefficients $1$. We then compute 
\begin{equation}\label{pairing_Delta_Delta}
	\langle r_{\Delta},v_\Delta \rangle = \dfrac{7^2 \cdot 13 \cdot 43} {2}.
\end{equation}
Since $7$ is less than the weight $10$, we do not expect to see a congruence at that prime. There is no cusp form in $W_{10,10}$ congruent to $\Delta$ modulo 13, but we \textit{do} observe that
\begin{equation*}
	\Delta \equiv F_1 \Mod{\p_{43}},
\end{equation*}
for the first few primes $\p$ of $K$, with $\p_{43}$ the unique prime of $L$ of norm $43$. To show this congruence holds for all primes of $K$, we need to check the integrally scaled period polynomials $R_\Delta$ and $R_{F_1}$ live in the same space modulo $\mf{q}_{43}$, the ideal of norm $43^2$ in $L'$, which satisfies $\mf{q}_{43} \cap \mathcal{O}_{L} = \p_{43}$. As before this just means that $\p_{43}$ is inert in $L'/L$. 

We already have the scaled period polynomials, and reducing them both modulo $\mf{q}_{43}$, we find 
\begin{equation*}
	R_\Delta \equiv 5 R_{F_1} \Mod{\mf{q}_{43}},
\end{equation*}
and so 
\begin{equation*}
	\Delta \equiv F_1 \Mod{\p_{43}},
\end{equation*}
as indicated by the copy of 43 appearing in the pairing in (\ref{pairing_Delta_Delta}). 

The fact that 43 appears both in the pairing and as the modulus of a congruence here \textit{could} be a coincidence, so we compute the same pairing, this time with $F_1$. Again computing, we have
\begin{equation*}
	N(\langle r_{F_1},v_{F_1} \rangle) = 2^2 \cdot 5^4 \cdot 7^4 \cdot 13^4 \cdot 43^4\cdot 173^2
\end{equation*}
We again discard the primes less than the weight $10$. We again see 43 appearing, suggesting this construction does capture congruence moduli. We also note that 
\begin{equation*}
	F_1 \equiv F_2 \Mod{\p},
\end{equation*}
where $\p \in \{ \p_{11}, \p_{43}, \p_{173} \}$, since $F_1$ and $F_2$ are $L$-conjugate, and each of $\{11,43,173\}$ ramifies in $L$. As they come from the structure of the coefficient field $L$, these congruences are much less interesting the others.

Meanwhile, the reappearance of $13$ in the product is, again, mysterious, and we cannot account for it. We note that it does \textit{not} come from a congruence with a torsion class mod 13, as there is no 13-torsion in $H^2(\Gamma_{11},V_{10,10}(\mathcal{O}_{11}))$, per Table 13 of \cite{Sengun_exp}.

%%%%%%%%%%%%%%%%%%%%
%%%%%%%%%%%%%%%%%%%%

\end{document}